\newcommand\C{{\cal C}}
\newcommand\N{{\cal N}}
\renewcommand\L{{\mathscr L}}
\newtheorem{theorem}{Theorem}[section]
\newtheorem{lemma}[theorem]{Lemma}
\newtheorem{definition}[theorem]{Definition}
\newtheorem{remark}[theorem]{Remark}
\newenvironment{proof}{\noindent{\bf Proof}\hspace{0.5em}}
    { \null  \hfill $\square$ \par}
\newcommand{\X}{\mathcal X}
\newcommand{\R}{\mathcal R}
\renewcommand{\S}{\mathcal S}
\renewcommand{\P}{\mathcal P}
\newcommand{\KK}{\mathscr K}
\newcommand{\abb}{{\cal A(\cal S)}}
\newcommand{\pbb}{{\cal P(\cal S)}}
\newcommand{\Bpi}{\mathscr B}
\newcommand{\ES}{{\mathbb S}}
\newcommand{\ET}{\mathbb T}
\newcommand{\EC}{\mathbb C}
\newcommand{\EX}{\mathbb X}
\newcommand{\EY}{\mathbb Y}
\newcommand\CX{-x+y}
\newcommand\CY{-\tau^q x+ \tau y}
\newcommand\CZ{-\tau^{2q} x+\tau^2 y+(\tau-\tau^q)}
\newcommand{\gt}{g_\ET}
\newcommand{\gc}{g_\EC}
\newcommand{\gs}{g_\ES}
\newcommand{\MM}{M}
\newcommand{\MMtau}{M}
\newcommand{\A}{\mathcal L}
\renewcommand{\k}{k}
\newcommand{\RR}{\mathcal R}
\newcommand{\gfq}{\mathbb F_q}
\newcommand{\gfqs}{\mathbb F_q'}
\newcommand{\gfqc}{\mathbb F_{q^3}}
\newcommand{\gfqcs}{\mathbb F_{q^3}'}
\newcommand{\TP}{{\mathcal T}_P}
\renewcommand{\r}{{q}}
\newcommand{\si}{\Sigma_\infty}
\newcommand{\li}{\ell_\infty}
\newcommand{\orsps}{order-$\r$-subplanes}
\newcommand{\orsls}{order-$\r$-sublines}
\newcommand{\orsp}{order-$\r$-subplane}
\newcommand{\orsl}{order-$\r$-subline}
\newcommand{\psline}{pencil-subline}
\newcommand{\dcsline}{dual-conic-subline}
\newcommand{\elltau}{\ell= [-\tau\tau^q, \tau+\tau^q,-1]}
\newcommand{\takeaway}{\backslash}
\newcommand{\st}{:}
\newcommand\PGL{{\rm PGL}}
\newcommand\GF{{\rm GF}}
\newcommand\PG{{\rm PG}}
\newcommand{\car}{E} 
\renewcommand\setminus{\backslash}
\newcommand{\Label}{\label}
\begin{document}
%

%

\title{
The exterior splash in $\PG(6,q)$: Transversals}

\author{S.G. Barwick and Wen-Ai Jackson
\date{\today}
\\ School of Mathematical Sciences, University of Adelaide\\
Adelaide 5005, Australia
}

\maketitle

Corresponding Author: Dr Susan Barwick, University of Adelaide, Adelaide
5005, Australia. Phone: +61 8 8313 3983, Fax: +61 8 8313 3696, email:
susan.barwick@adelaide.edu.au

Keywords: Bruck-Bose representation, subplanes, exterior splash
\\ AMS code: 51E20

\begin{abstract}
Let $\pi$ be an order-$q$-subplane of $\PG(2,q^3)$ that is exterior to $\ell_\infty$. Then the exterior splash of $\pi$ is the set of $q^2+q+1$ points on $\ell_\infty$ that lie on an extended line of $\pi$. Exterior splashes are projectively equivalent to scattered linear sets of rank 3, covers of the circle geometry $CG(3,q)$, and hyper-reguli in $\PG(5,q)$. In this article we use the Bruck-Bose representation in $\PG(6,q)$ to investigate the structure of $\pi$, and 
the interaction between $\pi$ and its exterior splash. In $\PG(6,q)$, an exterior splash $\mathbb S$ has two sets of cover planes (which are hyper-reguli) and we show that each set has three unique transversals lines in the cubic extension  $\PG(6,q^3)$. These transversal lines are used to characterise the carriers of $\mathbb S$, and to characterise the sublines of $\mathbb S$.
\end{abstract}

\section{Introduction}

In \cite{barw12,BJ-iff}, the authors study \orsps\ of $\PG(2,q^3)$ and determine their representation in the Bruck-Bose representation in $\PG(6,q)$. A full characterisation in $\PG(6,q)$ was given for \orsps\ that are secant or tangent to $\li$ in $\PG(2,q^3)$. However, an \orsp\ of $\PG(2,q^3)$ that is exterior to $\li$ forms a complex structure in $\PG(6,q)$, and one of our original motivations was to investigate this structure in $\PG(6,q)$. As part of this investigation, we studied the splash of an \orsp, and found this to be a structure rich in detail, and related to many other structures in geometry, see \cite{BJ-tgt1,BJ-tgt2,BJ-ext1,lavr14}. In this article we study the structure of an exterior \orsp\ in $\PG(6,q)$, as well as the interplay between this structure and the associated exterior splash. 

Let $\pi$ be 
a subplane of $\PG(2,q^3)$ of order $q$ that is exterior to $\li$. The lines of $\pi$ meet $\li$ in a set $\ES$ of size $q^2+q+1$, called the {\em exterior splash} of $\pi$. 
Properties of the exterior splash are studied in \cite{BJ-ext1}. 
In particular, we showed that the sets of points in an exterior splash has arisen in many different situations, namely scattered linear sets of rank 3, covers of the circle geometry $CG(3,q)$, hyper-reguli in $\PG(5,q)$, and  Sherk surfaces of size $q^2+q+1$.  

This article proceeds as follows. 
In Section~\ref{sect:intro} we
introduce the notation we use for 
 the Bruck-Bose representation of $\PG(2,q^3)$ in $\PG(6,q)$, as well as presenting some other preliminary results. 
 In Section~\ref{mapsection-Bpi2} we  choose an \orsp\ $\Bpi$ in $\PG(2,q^3)$ that is exterior to $\li$, and calculate its points 
 and exterior splash. This \orsp\ will be used in many of the proofs in this article. 
In  Section~\ref{sec:sub-str}, we study the 
structure of an \orsp\ in $\PG(6,q)$.  In 
particular, we show that it contains $q^2+q+1$ twisted cubics; it is the intersection of nine quadrics; and has an interesting tangent plane at each point. 

 We next study the exterior splash $\ES$ of $\li$ in the Bruck-Bose representation in $\PG(5,q)$. By results of Bruck~\cite{bruc73b}, $\ES$ has two switching sets denoted $\EC,\ET$, which we call covers of $\ES$. The three sets $\ES,\EC,\ET$ are called hyper-reguli in \cite{ostrom}.
  In Section~\ref{sec:coord-cover}, we find coordinates for the sets $\ES, \EC,\ET$.
   In 
 Section \ref{sec:coords-covers-new}, we 
 show that each of the sets $\ES, \EC,\ET$ has a unique triple of conjugate transversal lines in the cubic extension $\PG(5,q^3)$. 
These nine transversal lines are used to characterise the carriers of an exterior splash. 
Further, these transversal lines are related to 
the set of $q-1$ disjoint splashes of $\li$ that have common carriers, and their related covers. We interpret this result in terms of replacing hyper-reguli to create Andr\'e planes.
  In Section~\ref{sec:orsl-BB} we 
  use the transversal lines to characterise the \orsls\ of an exterior splash  in terms of how the corresponding 2-reguli meet the cover planes.

\section{The Bruck-Bose representation}\Label{sect:intro}

\subsection{The Bruck-Bose representation of $\PG(2,q^3)$ in $\PG(6,q)$}\Label{BBintro}

We  introduce the
notation we will use for the 
Bruck-Bose representation of $\PG(2,q^3)$ in $\PG(6,q)$. 
We work with the finite field $\gfq=\GF(q)$, and let $\gfqs=\gfq\setminus\{0\}$.

A 2-{\em spread} of $\PG(5,\r)$ is a set of $\r^3+1$ planes that partition
$\PG(5,\r)$. 
A 2-{\em regulus} of $\PG(5,\r)$ is a
set of $\r+1$ mutually disjoint planes $\pi_1,\ldots,\pi_{\r+1}$ with
the property that if a line meets three of the planes, then it meets all
${\r+1}$ of them. A $2$-regulus $\R$ has a set of $q^2+q+1$ mutually disjoint {\em ruling lines} that meet every plane of $\R$. A $2$-regulus is uniquely determined by three mutually disjoint planes, or four  (ruling) lines (mutually disjoint and lying in general position). A $2$-spread $\S$ is {\em regular} if for any three planes in $\S$, the
$2$-regulus containing them is contained in $\S$. 
See \cite{hirs91} for
more information on $2$-spreads.

The following construction of a regular $2$-spread of $\PG(5,\r)$ will be
needed. Embed $\PG(5,\r)$ in $\PG(5,\r^3)$ and let $g$ be a line of
$\PG(5,\r^3)$ disjoint from $\PG(5,\r)$. Let $g^\r$, $g^{\r^2}$ be the
conjugate lines of $g$; both of these are disjoint from $\PG(5,\r)$. Let $P_i$ be
a point on $g$; then the plane $\langle P_i,P_i^\r,P_i^{\r^2}\rangle$ meets
$\PG(5,\r)$ in a plane. As $P_i$ ranges over all the points of  $g$, we get
$\r^3+1$ planes of $\PG(5,\r)$ that partition $\PG(5,\r)$. These planes form a
regular 2-spread $\S$ of $\PG(5,\r)$. The lines $g$, $g^\r$, $g^{\r^2}$ are called the (conjugate
skew) {\em transversal lines} of the 2-spread $\S$. Conversely, given a regular $2$-spread
in $\PG(5,\r)$,
there is a unique set of three (conjugate skew) transversal lines in $\PG(5,\r^3)$ that generate
$\S$ in this way.

We will use the linear representation of a finite
translation plane $\P$ of dimension at most three over its kernel,
due independently to
Andr\'{e}~\cite{andr54} and Bruck and Bose
\cite{bruc64,bruc66}. 
Let $\si$ be a hyperplane of $\PG(6,\r)$ and let $\S$ be a $2$-spread
of $\si$. We use the phrase {\em a subspace of $\PG(6,\r)\takeaway\si$} to
  mean a subspace of $\PG(6,\r)$ that is not contained in $\si$.  Consider the following incidence
structure:
the \emph{points} of $\abb$ are the points of $\PG(6,\r)\takeaway\si$; the \emph{lines} of $\abb$ are the $3$-spaces of $\PG(6,\r)\takeaway\si$ that contain
  an element of $\S$; and \emph{incidence} in $\abb$ is induced by incidence in
  $\PG(6,\r)$.
Then the incidence structure $\abb$ is an affine plane of order $\r^3$. We
can complete $\abb$ to a projective plane $\pbb$; the points on the line at
infinity $\li$ have a natural correspondence to the elements of the $2$-spread $\S$.
The projective plane $\pbb$ is the Desarguesian plane $\PG(2,\r^3)$ if and
only if $\S$ is a regular $2$-spread of $\si\cong\PG(5,\r)$ (see \cite{bruc69}).
For the remainder of this article, we use $\S$ to denote a regular $2$-spread of $\si\cong\PG(5,q)$.

We use the following notation.  If $T$
is a point of $\li$ in $\PG(2,q^3)$, we use $[T]$ to refer to the plane of $\S$ corresponding to $T$. More generally, if $X$
is a set of points of $\PG(2,q^3)$, then we let $[X]$ denote the
corresponding set  in $\PG(6,q)$. If $P$ is an affine point of $\PG(2,q^3)$,  we generally simplify the notation and also
use $P$  to refer to the corresponding affine point in $\PG(6,q)$, although in  some cases, to avoid confusion, we use $[P]$.

When $\S$ is a regular $2$-spread, 
we can relate the coordinates of $\pbb\cong\PG(2,\r^3)$ and $\PG(6,\r)$ as
follows. Let $\tau$ be a primitive element in $\gfqc$ with primitive
polynomial $x^3-t_2x^2-t_1x-t_0.$
Every element $\alpha\in\gfqc$
can be uniquely written as $\alpha=a_0+a_1\tau+a_2\tau^2$ with
$a_0,a_1,a_2\in\gfq$. Points in $\PG(2,\r^3)$ have homogeneous coordinates
$(x,y,z)$ with $x,y,z\in\gfqc$, not all zero. Let the line at infinity $\li$ have
equation $z=0$; so the affine points of $\PG(2,\r^3)$ have coordinates
$(x,y,1)$. Points in $\PG(6,\r)$ have homogeneous coordinates
$(x_0,x_1,x_2,y_0,y_1,y_2,z)$ with $x_0,x_1,x_2,y_0,y_1,y_2,z\in\gfq$.  Let $\si$ have equation $z=0$. 
Let $P=(\alpha,\beta,1)$ be a point of $\PG(2,\r^3)$. We can write $\alpha=a_0+a_1\tau+a_2\tau^2$ and
$\beta=b_0+b_1\tau+b_2\tau^2$ with $a_0,a_1,a_2,b_0,b_1,b_2\in\gfq$. 
We want to map the element $\alpha$ of $\gfqc$ to the vector $(a_0,a_1,a_2)$, and we use the following notation to do this:
$$[\alpha]=(a_0,a_1,a_2).$$
This gives us some notation for the Bruck-Bose map, denoted $\epsilon$, from an affine point $P=(\alpha,\beta,1)\in\PG(2,\r^3)\takeaway\li$ to the corresponding affine point $[P]\in\PG(6,\r)\takeaway\Sigma_\infty$, namely  $$\epsilon(\alpha,\beta,1)=[(\alpha,\beta,1)]=([\alpha],[\beta],1)=(a_0,a_1,a_2,b_0,b_1,b_2,1).$$
More generally, if $z\in\gfq$, then 
$\epsilon(\alpha,\beta,z)=([\alpha],[\beta],z)=(a_0,a_1,a_2,b_0,b_1,b_2,z).$

Consider the case when $z=0$, a point on $\li$ in $\PG(2,q^3)$ has coordinates
 $L=(\alpha,\beta,0)$ for some $\alpha,\beta\in\gfqc$.
 In $\PG(6,q)$, the point 
 $\epsilon(\alpha,\beta,0)=([\alpha],[\beta],0)$  is one point in the spread element $[L]$
corresponding to $L$. Moreover, the spread element $[L]$ consists of all the points $\{([\alpha x],[\beta x],0)\st x\in\gfqcs\}$. Hence the regular 2-spread $\S$ consists of the planes $\{[kx],[x],0]\st x\in\gfqc'\}$ for $k\in\gfqc\cup\{\infty\}$.

With this coordinatisation for the Bruck-Bose map, we can calculate the coordinates  of the transversal lines  of the regular $2$-spread $\S$. 
 
 \begin{lemma}\Label{S-coord-g}{\rm \cite{barw12}}
 Let $p_0=
t_1+t_2\tau-\tau^2=-\tau^q\tau^{q^2}$, $p_1=t_2-\tau=\tau^q+\tau^{q^2}$, $p_2=-1$, and $A=(p_0,p_1,p_2)$. Then in the cubic extension $\PG(6,q^3)$, one transversal line of the regular $2$-spread $\S$ contains the two points $A_1=(p_0,p_1,p_2,0,0,0,0)=(A,[0],0)$ and $A_2=(0,0,0,p_0,p_1,p_2,0)=([0],A,0)$.
\end{lemma}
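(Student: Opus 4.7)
The strategy is to verify the defining property of a transversal line directly: for every $P\in g:=\langle A_1,A_2\rangle$, the plane $\langle P,P^{q},P^{q^2}\rangle$ meets $\si$ in a plane of $\S$. First, I would confirm the stated equalities $p_0=-\tau^q\tau^{q^2}$ and $p_1=\tau^q+\tau^{q^2}$ from Vieta's formulas for the three roots $\tau,\tau^q,\tau^{q^2}$ of $x^3-t_2x^2-t_1x-t_0$, using $\tau^3=t_2\tau^2+t_1\tau+t_0$ to rewrite $\tau^q\tau^{q^2}=t_0/\tau=-(t_1+t_2\tau-\tau^2)$. Applying Frobenius then gives the conjugate vectors $A^q=(-\tau\tau^{q^2},\tau+\tau^{q^2},-1)$ and $A^{q^2}=(-\tau\tau^q,\tau+\tau^q,-1)$. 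Since the components of $A$ involve $\tau\notin\gfq$, neither $A_1$ nor $A_2$ is $\gfq$-rational, and since both have final coordinate $0$, $g$ is a genuine line of the cubic extension of $\si$, disjoint from $\si$ itself.

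A typical point on $g$ is $P=\lambda A_1+\mu A_2=(\lambda A,\mu A,0)$ with $(\lambda,\mu)\in\gfqc^2\setminus\{0\}$, whose Frobenius conjugates are $P^{q^i}=(\lambda^{q^i}A^{q^i},\mu^{q^i}A^{q^i},0)$. The plane $\langle P,P^q,P^{q^2}\rangle$ is Galois-stable, so its $\gfq$-rational points are parameterised by the trace sum $\alpha P+\alpha^q P^q+\alpha^{q^2}P^{q^2}$ for $\alpha\in\gfqc$. Writing
$$\phi_A(\beta):=\beta A+\beta^q A^q+\beta^{q^2}A^{q^2}\in\gfq^3,$$
these rational points take the form $(\phi_A(\alpha\lambda),\phi_A(\alpha\mu),0)$.

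The heart of the proof is the identity $\phi_A(\beta)=[c\beta]$ for all $\beta\in\gfqc$, where $c=t_1+2t_2\tau-3\tau^2$ is a fixed element of $\gfqcs$; indeed $c=-f'(\tau)$ for the minimal polynomial $f(x)=x^3-t_2x^2-t_1x-t_0$, so $c\neq 0$ by separability of $\tau$ over $\gfq$. By $\gfq$-linearity of $\phi_A$ it suffices to verify the identity for $\beta\in\{1,\tau,\tau^2\}$, each case reducing to a short symmetric-function calculation. Granted the identity and substituting $x=c\alpha$ (which ranges over $\gfqcs$ as $\alpha$ does), the $\gfq$-rational points of $\langle P,P^q,P^{q^2}\rangle$ become $\{([kx],[x],0)\st x\in\gfqcs\}$ with $k=\lambda/\mu\in\gfqc\cup\{\infty\}$; this is exactly the spread plane of $\S$ indexed by $k$ (the limiting cases $\mu=0$ and $\lambda=0$ giving $[\infty]$ and $[0]$). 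Hence $g$ satisfies the defining property of a transversal line of $\S$.

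The main obstacle is the identity $\phi_A(\beta)=[c\beta]$, in particular the $\beta=\tau^2$ case, which requires reducing $\tau^3$ and $\tau^4$ modulo the minimal polynomial and handling power sums of $\tau,\tau^q,\tau^{q^2}$ via Vieta. Once this is done, the rest is straightforward coordinate bookkeeping matching the trace-sum parameterisation of rational points with the explicit spread-plane parameterisation recorded in the paragraph preceding the lemma.
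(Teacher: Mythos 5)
The paper offers no proof of this lemma --- it is imported verbatim from \cite{barw12} --- so there is nothing internal to compare against; your argument stands as a self-contained verification, and it is correct. The structure is right: you reduce the transversal property to showing that for $P=(\lambda A,\mu A,0)$ the rational points of the Galois-stable plane $\langle P,P^q,P^{q^2}\rangle$, parameterised by $\alpha P+\alpha^qP^q+\alpha^{q^2}P^{q^2}$, form exactly the spread plane $\{([kx],[x],0)\st x\in\gfqcs\}$ with $k=\lambda/\mu$. Your key identity $\phi_A(\beta)=[c\beta]$ with $c=-f'(\tau)$ is true and is really the dual-basis relation in disguise: since $f(x)/(x-\tau)=x^2-p_1x-p_0$, the $j$-th entry of $\phi_A(\beta)$ is $\mathrm{Tr}(\beta p_j)$, which by the standard dual-basis formula for $\{1,\tau,\tau^2\}$ equals the coefficient of $\tau^j$ in $-f'(\tau)\beta$; this observation would spare you the case-by-case computation for $\beta\in\{1,\tau,\tau^2\}$, and it explains conceptually why $c=-f'(\tau)$ appears and why separability guarantees $c\ne0$. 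Two trivial points to tidy: non-rationality of the two spanning points $A_1,A_2$ does not by itself make the line $g$ disjoint from $\si$ (a line through two non-rational points can still contain rational points), but disjointness falls out of your main computation, since a rational $P$ would collapse $\langle P,P^q,P^{q^2}\rangle$ to a point rather than yielding the $q^2+q+1$ rational points you exhibit; and the substitution should be $x=c\alpha\mu$ rather than $x=c\alpha$ to land exactly on $([kx],[x],0)$. Neither affects the validity of the proof.
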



%
%

\subsection{Some useful homographies}\Label{sec:eg:Mk}

In order to simplify the notation in some of the following coordinate based proofs, we define some homographies which will be useful.
 We can represent an element $x=x_0+x_1\tau+x_2\tau^2\in\gfqc$  as a point $[x]=(x_0,x_1,x_2)$ in $\PG(2,q)$. 
For $\k\in\gfqcs$, consider the homography $\zeta_\k $ in $\PGL(3,q)$ with matrix $\MM_\k $ that maps $[x]$ to $[\k x]$.
Let $k\in\gfqcs$, write $\k=\k_0+\k_1\tau+\k_2\tau^2$, then $\MM_\k=\k_0\MM_1+\k_1\MM_\tau+\k_2\MM_{\tau^2}$, and hence
\begin{eqnarray}\label{JA-eqn}
\MM_\k A=\k A&\quad{\rm and}\quad&\MM_\k A^{q^2}=\k^{q^2}A^{q^2},
\end{eqnarray}
where $A=(p_0,p_1,p_2)^t$ is defined in Lemma~\ref{S-coord-g}.
We use $\zeta_\k $ to define the homography $\theta_\k $ of $\PG(5,q)$, $\k\in\gfqc$:
\begin{eqnarray*}
\theta_\k \colon ([x],[y])\rightarrow([\k x],[y])=(\MM_\k [x],[y]).
\end{eqnarray*}
From the matrix $\MM_\tau$, we construct three more homographies of $\PG(2,q)$ with matrices $U_0,U_1,U_2$ that help with the notation in the proof of Theorem~\ref{psline-conics-special}. 
For $i=0,1,2$, (with $p_i$ as in Lemma~\ref{S-coord-g}), let
    $$U_i=(p_0I+p_1\MM_\tau +p_2\MM_\tau ^2)^{q^i}
    =
\begin{pmatrix}
p_0^{q^i}&\tau^{q^i} p_0^{q^i}&\tau^{2q^i}p_0^{q^i}\\
p_1^{q^i}&\tau^{q^i} p_1^{q^i}&\tau^{2q^i}p_1^{q^i}\\
p_2^{q^i}&\tau^{q^i} p_2^{q^i}&\tau^{2q^i}p_2^{q^i}
\end{pmatrix}.$$
Then 
$$
U_i\begin{pmatrix}a_0\\a_1\\a_2\end{pmatrix}=(a_0+a_1\tau^{q^i}+a_2\tau^{2q^i})\begin{pmatrix}p_0^{q^i}\\p_1^{q^i}\\p_2^{q^i}\end{pmatrix},  a_0,a_1,a_2\in\gfqc.$$
 Note that if $a_0,a_1,a_2\in\gfq$, and $\alpha=a_0+a_1\tau+ a_2\tau^2$, then $[\alpha]=(a_0,a_1,a_2)^t$, and we write the matrix equation as $
U_i[\alpha]=\alpha^{q^i}A^{q^i}$.
%
%
%

\subsection{Sublines in the Bruck-Bose representation}\Label{sec:sublinesinBB}

An \emph{\orsp} of $\PG(2,\r^3)$ is a subplane of
$\PG(2,q^3)$ of order $q$. Equivalently, it is an image of $\PG(2,\r)$ under $\PGL(3,\r^3)$. An \emph{\orsl} of
$\PG(2,q^3)$ is a line of an \orsp\ of $\PG(2,q^3)$. An \emph{\orsl}\ of $\PG(1,q^3)$ is defined to be  
one of the images of $\PG(1,\r)=\{(a,1)\st a\in\gfq\}\cup\{(1,0)\}$
under $\PGL(2,\r^3)$.  

In \cite{barw12,BJ-iff}, the authors determine the representation of \orsps\  and
\orsls\ of $\PG(2,q^3)$ in the Bruck-Bose representation in $\PG(6,q)$, and
we quote the results for \orsls\ which are needed in this article.  We first introduce some terminology to simplify the statements. 
Recall that $\S$ is a regular $2$-spread  in the hyperplane at infinity $\si$ in $\PG(6,q)$. 

\begin{definition}\Label{def:S-special}
\begin{enumerate}
\item 
An {\em
   $\S$-special conic}  is a non-degenerate conic $\C$ contained in a plane of $\S$, such
that 
the extension of $\C$ to $\PG(6,q^3)$
 meets the transversals of
 $\S$. 
 \item An {\em $\S$-special twisted cubic} is a
twisted cubic  $\N$ in a $3$-space
 of $\PG(6,q)\backslash\si$ about a plane of $\S$, such that the extension of $\N$ to $\PG(6,q^3)$ meets the transversals of $\S$. 
\end{enumerate}
\end{definition}

\begin{theorem}{\rm \cite{barw12}}\Label{sublinesinBB}
Let $b$ be an \orsl\ of $\PG(2,q^3)$. 
\begin{enumerate}
\item\Label{subline-secant-linfty} If $b\subset\li$, then in $\PG(6,q)$, $b$ corresponds to a 
$2$-regulus of $\S$. Conversely every $2$-regulus of $\S$ corresponds to
an \orsl\ of $\li$. 
\item If $b$ meets $\li$ in a point, then in $\PG(6,q)$, $b$  corresponds to
  a line of $\PG(6,q)\backslash\si$. Conversely every line of
  $\PG(6,q)\backslash\si$ corresponds to an \orsl\ of $\PG(2,q^3)$ tangent
  to $\li$.
\item\Label{FFA-orsl}
If $b$ is disjoint from $\li$, then in
$\PG(6,q)$, $b$ corresponds to an $\S$-special twisted cubic. Further, a
twisted cubic $\N$ of $\PG(6,q)$ corresponds to an \orsl\ of
$\PG(2,q^3)$ if and only if $\N$ is $\S$-special.
\end{enumerate}
\end{theorem}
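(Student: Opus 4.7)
My plan is to prove each part by a direct coordinate computation, using $\li$-preserving collineations of $\PG(2, q^3)$ (realised in $\PG(6, q)$ by $\si$-fixing, $\S$-preserving collineations) to reduce to convenient representatives within each case.

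\textbf{Part 1 (sublines on $\li$).} Reduce to the canonical $\orsl$ $\{(a, 1, 0) : a \in \gfq\} \cup \{(1, 0, 0)\}$. Its image consists of the spread planes $[\infty] = \{(x_0, x_1, x_2, 0, 0, 0, 0)\}$ and, for $a \in \gfq$, $[a] = \{(a x_0, a x_1, a x_2, x_0, x_1, x_2, 0)\}$. The family of lines $\{(\lambda x_0, \lambda x_1, \lambda x_2, \mu x_0, \mu x_1, \mu x_2, 0) : (\lambda : \mu) \in \PG(1, q)\}$, indexed by $(x_0 : x_1 : x_2) \in \PG(2, q)$, each meet all $q + 1$ planes, exhibiting a $2$-regulus. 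Conversely, three disjoint spread planes uniquely determine both a $2$-regulus and, via their points on $\li$, an $\orsl$ of $\li$, and the two $(q + 1)$-sets coincide.

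\textbf{Part 2 (tangent sublines).} Reduce to $\{(a, 0, 1) : a \in \gfq\} \cup \{(1, 0, 0)\}$; the affine images $\epsilon(a, 0, 1) = (a, 0, 0, 0, 0, 0, 1)$ form an affine line of $\PG(6, q) \setminus \si$ whose closure meets $\si$ at $(1, 0, 0, 0, 0, 0, 0) \in [(1, 0, 0)]$, as required. Conversely, any line $m$ of $\PG(6, q) \setminus \si$ meets $\si$ in a single point inside a unique spread plane $[P]$; the $3$-space $\langle m, [P]\rangle$ is the Bruck-Bose image of a line $\ell$ of $\PG(2, q^3)$ through $P$, and the $q$ affine points of $m$ together with $P$ form an $\orsl$ of $\ell$ tangent to $\li$ at $P$.

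\textbf{Part 3 (sublines disjoint from $\li$).} After reducing the line of the $\orsl$ to $y = 0$ (so $P_\infty = (1, 0, 0)$), parametrise the $\orsl$ as $\{(t(r), 0, 1) : r \in \gfq\}$ together with $(t(\infty), 0, 1)$, where $t(r) = (\alpha r + \beta)/(\gamma r + \delta)$ and $\gamma \neq 0$. Using
$$(\gamma r + \delta)(\gamma^q r + \delta^q)(\gamma^{q^2} r + \delta^{q^2}) = N(r) \in \gfq[r],$$
the Bruck-Bose image in homogeneous form is $([t(r)]\, N(r),\ 0,\, 0,\, 0,\ N(r))$, four cubic forms in $r$ together with three zeros. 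Homogenising $r = s/u$ places the curve in the $3$-space $\{y_0 = y_1 = y_2 = 0\}$, which contains $[P_\infty]$. Checking linear independence of the four cubic forms (and excluding degenerate cases that would force the $\orsl$ to be of a different type) identifies the image as a twisted cubic. For the $\S$-special condition, extend $r$ to $\gfqc$: substituting $r = \tau$ and simplifying via $\tau^3 = t_2 \tau^2 + t_1 \tau + t_0$ yields a point proportional to $A_1 = (p_0, p_1, p_2, 0, 0, 0, 0)$ on the transversal of $\S$ from Lemma~\ref{S-coord-g}, and $r = \tau^q,\ \tau^{q^2}$ land on the two conjugate transversals.

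The main obstacle is the converse of Part~3. Given an $\S$-special twisted cubic $\N$ about a spread plane $[P]$, interpret its containing $3$-space as the Bruck-Bose image of a line $\ell$ of $\PG(2, q^3)$ through $P$, and pull back the $q + 1$ affine points of $\N$ to $q + 1$ collinear affine points of $\ell \setminus \{P\}$. The delicate step is showing these form an $\orsl$: the $\S$-special condition pins down three prescribed transversal points as the images of $\tau, \tau^q, \tau^{q^2}$ under a Möbius parametrisation $\PG(1, q^3) \to \ell$, which rigidly forces the $q + 1$ pullback parameters to be the Möbius image of $\gfq$, yielding an $\orsl$. A matching dimension count between $\S$-special twisted cubics about $[P]$ and $\orsls$ on lines through $P$ disjoint from $\li$ provides an alternative route.
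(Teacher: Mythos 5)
First, a point of order: the paper does not prove Theorem~\ref{sublinesinBB} — it is quoted from \cite{barw12} — so there is no in‑paper proof to compare yours against, and I can only assess your proposal on its own terms. Parts 1 and 2 and the forward half of Part 3 are essentially sound and follow the standard computation. One concrete error in Part 3: with $t(r)=(\alpha r+\beta)/(\gamma r+\delta)$ left general, the point of the extended curve at $r=\tau$ has last coordinate $N(\tau)$, which is not zero in general, so that point does not even lie in $\si^*$ and hence cannot lie on a transversal of $\S$ (the transversals are lines of $\si^*$). The three points of the extended twisted cubic lying in the plane $[P_\infty]^*$ are those at the parameters where $N(r)=0$, namely $r=-\delta/\gamma$ and its two conjugates; evaluating there (using the identity $[c]\propto cA+c^qA^q+c^{q^2}A^{q^2}$, obtained by inverting the Vandermonde matrix in $1,\tau^{q^i},\tau^{2q^i}$) gives points proportional to $A_1$, $A_1^q$, $A_1^{q^2}$. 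Your substitution $r=\tau$ becomes correct only if you first normalise the subline so that $\gamma=1$, $\delta=-\tau$, a reduction you have not made.

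More seriously, the converse of Part 3 — which you yourself flag as the main obstacle — is not actually proved. The claim that the $\S$-special condition ``rigidly forces the $q+1$ pullback parameters to be the M\"obius image of $\gfq$'' is precisely the statement that needs establishing, and nothing in your outline supplies it: knowing that a rational parametrisation of $\N$ defined over $\gfq$ sends three conjugate parameters to the three transversal points does not, by itself, relate that parametrisation to the $\gfqc$-coordinate on the affine points of the line $\ell$ of $\PG(2,q^3)$ determined by the ambient $3$-space. The workable repair is the counting route you mention only in passing: show that the number of $\S$-special twisted cubics in a fixed $3$-space about $[P]$ equals the number of \orsls\ of $\ell$ avoiding $P$, and combine this with the injectivity of the forward correspondence. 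As written, that count is asserted rather than performed, so the characterisation in Part 3 remains open in your proposal.
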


The article 
 \cite{barw12} also determines the representation of  secant and tangent  \orsps\ of $\PG(2,q^3)$ in $\PG(6,q)$. The representation of an exterior \orsp\ in $\PG(6,q)$ is more complex to describe. One of the motivations of this work was to investigate this representation in more detail. Some aspects of the representation are discussed in more detail in Section~\ref{sec:sub-str}.

\subsection{Properties of exterior splashes}

We need the following group theoretic results about \orsps\ and exterior splashes which appear in \cite{BJ-ext1}.

\begin{theorem}\Label{transsplashes}
Consider the collineation group  $G=\PGL(3,\r^3)$ acting on $\PG(2,q^3)$. The 
subgroup $G_\ell$ fixing a line $\ell$ is transitive on the \orsps\ that are exterior to $\ell$, and is transitive on the
exterior splashes of $\ell$.
\end{theorem}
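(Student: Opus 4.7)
The plan is to reduce both assertions to a single transitivity statement: $G=\PGL(3,\r^3)$ acts transitively on pairs $(\pi,\ell)$ in which $\pi$ is an \orsp\ of $\PG(2,\r^3)$ and $\ell$ is a line of $\PG(2,\r^3)$ exterior to $\pi$. Once this is established, the first assertion follows by fixing the line $\ell$, and the second follows because each exterior splash is determined by such a pair (namely, the set of intersections of $\ell$ with the $\r^2+\r+1$ extended lines of $\pi$), so $G_\ell$-transitivity on pairs descends to $G_\ell$-transitivity on the resulting splashes.

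To prove transitivity on pairs I would factor it as (a)~$G$ is transitive on \orsps, and (b)~for any fixed \orsp\ $\pi$, the stabiliser $G_\pi\cong\PGL(3,\r)$ is transitive on the lines of $\PG(2,\r^3)$ exterior to $\pi$. Part~(a) is immediate from the definition of an \orsp\ as a $G$-image of $\PG(2,\r)$. Granting (a) and (b), for two \orsps\ $\pi_1,\pi_2$ exterior to $\ell$ one takes $g\in G$ with $g(\pi_1)=\pi_2$; then $g(\ell)$ is exterior to $\pi_2$, so (b) supplies $h\in G_{\pi_2}$ with $h(g(\ell))=\ell$, whence $hg\in G_\ell$ carries $\pi_1$ to $\pi_2$.

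The work lies in~(b). Coordinatise so that $\pi=\{(x:y:z)\st x,y,z\in\gfq\}$ and write $\ell$ as $u_0 x+u_1 y+u_2 z=0$ with $u_i=a_i+b_i\tau+c_i\tau^2$ and $a_i,b_i,c_i\in\gfq$. Setting $a=(a_0,a_1,a_2)$, $b=(b_0,b_1,b_2)$, $c=(c_0,c_1,c_2)$, the $\gfq$-independence of $\{1,\tau,\tau^2\}$ yields that a point $(x,y,z)\in\pi$ lies on $\ell$ iff $(x,y,z)$ is a common annihilator of $a$, $b$, $c$; hence $\ell$ is exterior to $\pi$ precisely when $\{a,b,c\}$ is a basis of $\gfq^3$. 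Given two exterior lines with associated bases $B_1,B_2$, the unique $\gfq$-linear isomorphism $\gfq^3\to\gfq^3$ carrying $B_1$ to $B_2$ lies in $\GL(3,\r)$ and induces an element of $G_\pi=\PGL(3,\r)$ sending the first line to the second, proving~(b).

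The main obstacle is verifying that the $\gfq$-span of $\{a,b,c\}$ is well defined on the projective class of the line, i.e., invariant under $u\mapsto\lambda u$ for $\lambda\in\gfqcs$. Since the span is trivially preserved under $\gfq$-scaling, it suffices to check invariance under $\lambda=\tau$, which reduces to a short computation using the minimal polynomial $x^3-t_2 x^2-t_1 x-t_0$ of $\tau$ and the fact that $t_0\neq 0$. Once this well-definedness is in hand, the orbit composition in~(b) and the splash descent from pairs are routine.
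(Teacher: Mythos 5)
Your argument is correct, but note that the paper itself offers no proof of this statement: Theorem~\ref{transsplashes} is quoted from \cite{BJ-ext1}, so there is nothing internal to compare against. Your route is a clean, self-contained one: reduce to transitivity of $G$ on pairs $(\pi,\ell)$ with $\ell$ exterior to $\pi$, which you correctly split into transitivity on \orsps\ (immediate from the definition) and transitivity of the subgroup $\PGL(3,q)\le G_\pi$ on the lines exterior to $\pi=\PG(2,q)$. The key computation — that $[u_0,u_1,u_2]$ with $u_i=a_i+b_i\tau+c_i\tau^2$ is exterior to $\PG(2,q)$ iff the coefficient vectors $a,b,c$ form a basis of $\gfq^3$, since a point of $\pi$ on the line is exactly a common annihilator of $a,b,c$ — is right, and the linear map carrying one such (ordered) basis to another induces the required homography, modulo the routine bookkeeping that line coordinates transform by the inverse-transpose. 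The descent from pairs to splashes is also fine, since an element of $G_\ell$ carries the splash of $\pi$ to the splash of its image. One remark: the ``main obstacle'' you identify is not actually an obstacle. You only need the \emph{existence} of a homography mapping one exterior line to another, so you may fix an arbitrary $\gfqc$-representative $u$ of each line and work with the resulting bases; no invariance of the construction under $u\mapsto\lambda u$ is required (and in the exterior case the span in question is all of $\gfq^3$ anyway, so the check is vacuous). Your verification via the minimal polynomial of $\tau$ is correct but can be omitted.
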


\begin{theorem}\Label{propthm}
The group $K=\PGL(3,q^3)_\pi$ acting on $\PG(2,q^3)$ and
 fixing an \orsp\ $\pi$ is transitive on  the points of $\pi$.
 \end{theorem}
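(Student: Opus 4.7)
The plan is to reduce to a canonical order-$q$-subplane via the projective equivalence of all \orsps, and then exhibit an obvious subgroup of $K$ that is already transitive (in fact 2-transitive) on its points.

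First, all \orsps\ of $\PG(2,q^3)$ are projectively equivalent under $\PGL(3,q^3)$. (This is the unrestricted analogue of Theorem~\ref{transsplashes}; it follows immediately because any ordered frame of four points in general position in an \orsp\ can be mapped to any other such frame by a unique element of $\PGL(3,q^3)$.) Hence, if $\pi_0$ denotes the canonical order-$q$-subplane $\{(x,y,z)\st x,y,z\in\gfq\}\cong\PG(2,q)$ and $g\in\PGL(3,q^3)$ satisfies $g(\pi_0)=\pi$, then $\PGL(3,q^3)_\pi=g\,\PGL(3,q^3)_{\pi_0}\,g^{-1}$, and it suffices to prove the statement for $\pi_0$.

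Next, I would identify a large subgroup of $\PGL(3,q^3)_{\pi_0}$. Every element of $\PGL(3,q)$ is represented by an invertible $3\times 3$ matrix with entries in $\gfq\subset\gfqc$, and such a matrix acts on $\PG(2,q^3)$ as an element of $\PGL(3,q^3)$. Because this action sends $\gfq$-rational coordinate triples to $\gfq$-rational coordinate triples, the image of $\pi_0$ is $\pi_0$, giving a natural embedding $\PGL(3,q)\hookrightarrow \PGL(3,q^3)_{\pi_0}$. Restricted to $\pi_0\cong\PG(2,q)$, this subgroup acts exactly as the full $\PGL(3,q)$, which is 2-transitive (hence transitive) on the points of $\PG(2,q)$.

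Combining the two steps, $\PGL(3,q^3)_{\pi_0}$ is transitive on the points of $\pi_0$, and conjugating by $g$ transports this to transitivity of $K=\PGL(3,q^3)_\pi$ on the points of $\pi$. There is no real obstacle here; the only subtlety is the initial reduction step, which relies on the fact that $\PGL(3,q^3)$ acts transitively on ordered quadrangles of points in general position, so any two \orsps\ can be mapped to one another by an element of $\PGL(3,q^3)$ that sends a chosen frame of one to a chosen frame of the other.
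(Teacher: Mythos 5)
Your proof is correct. Note, however, that the paper does not actually prove this statement: Theorem~\ref{propthm} is quoted from \cite{BJ-ext1} as a known result, so there is no internal proof to compare against. Your argument is the standard one and is sound: reduce to $\pi_0=\PG(2,q)$ by projective equivalence, observe that $\PGL(3,q)$ (matrices over $\gfq$) sits inside $\PGL(3,q^3)_{\pi_0}$ and already acts $2$-transitively on the points of $\pi_0$, then conjugate.

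One small remark on the reduction step. The parenthetical justification via frames is slightly more delicate than ``immediate'': mapping an ordered quadrangle of $\pi_1$ to an ordered quadrangle of $\pi_2$ sends $\pi_1$ to \emph{some} \orsp\ containing the target quadrangle, and identifying that image with $\pi_2$ requires knowing that a quadrangle of $\PG(2,q^3)$ lies in a unique \orsp. This is true (a short orbit count: the stabiliser of $\pi_0$ in $\PGL(3,q^3)$ has order at least $|\PGL(3,q)|$, and since every ordered quadrangle lies in at least one \orsp, comparing $|\PGL(3,q^3)|$ with the number of (subplane, ordered quadrangle) incidences forces exactly one), but it deserves a sentence when $q$ is not prime, since the quadrangle only \emph{generates} the subplane over the prime subfield. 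In fact you can sidestep the issue entirely: the paper defines an \orsp\ to be an image of $\PG(2,q)$ under $\PGL(3,q^3)$, so transitivity of $\PGL(3,q^3)$ on \orsps\ is built into the definition and no frame argument is needed.
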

 
 Consider the group  $G=\PGL(3,q^3)$ acting on $\PG(2,q^3)$. It has an important subgroup
$I=G_{\pi,\ell}$ which 
fixes an \orsp\ $\pi$, and a line $\ell$ exterior to $\pi$. 
By \cite{BJ-ext1}, $I$ fixes exactly three lines: $\ell$, and its conjugates $m$, $n$ with respect to $\pi$; and  
$I$ \ fixes exactly three points: $\car_1=\ell\cap m$, 
$\car_2=\ell\cap n$, 
$\car_3=m \cap n$, which are conjugate with respect to $\pi$.
This group $I$ identifies  two special points $\car_1=\ell\cap m$, 
$\car_2=\ell\cap n$ on $\ell$ which are called the {\em carriers} of  the exterior splash $\ES$ of $\pi$. This is consistent with the definition of carriers of a circle geometry $CG(3,q)$, see \cite{BJ-ext1}. Further, the fixed points and fixed lines of $I=G_{\pi,\ell}$ are used to define an important class of conics in an \orsp\ $\pi$ with respect to an exterior line $\ell$.

 \begin{definition} Let $\pi$ be an \orsp\ with exterior line $\ell$, and let $I=\PGL(3,q^3)_{\pi,\ell}$ have fixed lines $\ell,m,n$ and fixed points $E_1,E_2,E_3$. 
 \begin{enumerate}
 \item A conic of $\pi$ whose extension to $\PG(2,q^3)$ contains the three points $\car_1,\car_2,\car_3$ is called a $(\pi,\ell)$-{\em special conic} of $\pi$.
\item A dual conic of $\pi$  whose extension to $\PG(2,q^3)$ contains the three  lines $\ell,m,n$ is called a 
$(\pi,\ell)$-{\em special dual conic}.
\end{enumerate}
\end{definition}
Note that  special conics and special dual conics are necessarily
 irreducible (see \cite{BJ-iff}).
This definition of $(\pi,\ell)$-special conics is consistent with the definition of $\S$-special conics given in Definition~\ref{def:S-special}. 
That is, if $\C$ is an $\S$-special conic of a spread element $\alpha\in\S$, and $P=\alpha\cap g$, where $g$ is a transversal of the regular $2$-spread $\S$, then $\C$ is an $(\alpha,PP^q)$-special conic of $\alpha$.

\section{Coordinatising an exterior \orsp}\Label{mapsection-Bpi2}

Recall from Theorem~\ref{transsplashes} that the group of homographies of $\PG(2,q^3)$ is transitive on pairs $(\pi,\ell)$ where $\pi$ is an \orsp\ exterior to the line $\ell$. So if we want to use coordinates to prove a result about exterior \orsps, we can without loss of generality prove it for a particular exterior \orsp.
In this section we calculate the coordinates for an \orsp\ $\Bpi$ of $\PG(2,q^3)$ that is exterior to $\li$.  We will use this \orsp\ to analyse the exterior splash in the Bruck-Bose representation in $\PG(6,q)$.

We begin with 
the \orsp\ $\pi_0=\PG(2,q)$ of $\PG(2,q^3)$. Recall that $\li$ has equation $z=0$, so $\pi_0$  is secant to $\li$. We give a homography that maps $\pi_0$  to an \orsp\ exterior to $\li$. 
We define the following matrices:
\begin{eqnarray}
K=\begin{pmatrix}-\tau&1&0\\ -\tau^q&1&0\\ \tau\tau^q&-\tau-\tau^q&1\end{pmatrix},\quad\quad \quad
K'=\begin{pmatrix}-1&1&0\\ -\tau^q&\tau&0\\ -\tau^{2q}&\tau^2& \tau-\tau^q \end{pmatrix}. 
\label{matrixK}\end{eqnarray}
Let  $\sigma$ be the homography  of $\PG(2,q^3)$  with matrix $K$. 
Note that
as $KK'$ is a $\gfqc$-multiple of the identity matrix, it follows that $K'$ is a matrix for the inverse homography $\sigma^{-1}$.  Thus, if we write the points $X$ of $\PG(2,q^3)$ as column vectors, and the lines $\ell$ of $\PG(2,q^3)$ as row vectors, then $\sigma\colon X\mapsto KX$ and  $\sigma\colon \ell\mapsto\ell K'$.

\begin{theorem}\Label{phi-prop} In $\PG(2,q^3)$, let $\pi_0=\PG(2,q)$, and let $\Bpi=\sigma(\pi_0)$, then
\begin{enumerate}
\item \Label {phi-ext} $\Bpi$ is an \orsp\ exterior to $\li$.
\item \Label {B2-splash} The exterior splash of $\Bpi$ on $\li$ is 
\[\ES=\{ (k,1,0)\st k\in\gfqc, 
k^{q^2+q+1}=1\}\equiv\{(\tau^{(q-1)i},1,0)\st 0\le i < q^2+q+1\}.
\]
\item \Label{B2-carriers}  The carriers of the exterior splash of $\Bpi$  are $\car_1=(1,0,0)$ and $\car_2=(0,1,0)$.
\end{enumerate}
\end{theorem}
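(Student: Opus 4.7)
The approach is direct coordinate computation using the matrices $K$ and $K'$, exploiting $\sigma$-equivariance throughout. Since $\sigma\in\PGL(3,q^3)$ and $\pi_0=\PG(2,q)$ is an order-$q$-subplane, $\Bpi=\sigma(\pi_0)$ is automatically an order-$q$-subplane, so for part~(\ref{phi-ext}) it remains to verify exteriorness. Equivalently, $\pi_0\cap\sigma^{-1}(\li)=\emptyset$. Since $\sigma^{-1}$ acts on line row-vectors by $\ell\mapsto\ell K$, reading the bottom row of $K$ gives $\sigma^{-1}(\li)=[\tau\tau^q,\,-(\tau+\tau^q),\,1]$. I would then show this line misses $\pi_0$ by proving that $1,\,\tau+\tau^q,\,\tau\tau^q$ are $\gfq$-linearly independent: applying $x\mapsto x^q$ twice to a putative $\gfq$-relation yields a $3\times 3$ system whose coefficient determinant evaluates to the Vandermonde-type product $(\tau-\tau^q)(\tau-\tau^{q^2})(\tau^q-\tau^{q^2})\neq 0$, since $\tau$ has degree three over $\gfq$.

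For part~(\ref{B2-splash}), I would parameterise a line of $\pi_0$ as $\ell=[r,s,t]$ with $(r,s,t)\in\gfq^3\setminus\{0\}$ and compute $\sigma(\ell)=[r,s,t]K'$. Setting $\alpha=r+s\tau+t\tau^2\in\gfqc$, the column structure of $K'$ collapses the first two entries of $[r,s,t]K'$ to $-\alpha^q$ and $\alpha$, yielding $\sigma(\ell)=[-\alpha^q,\,\alpha,\,t(\tau-\tau^q)]$. Intersecting with $\li=[0,0,1]$ via cross product gives the splash point $(\alpha,\alpha^q,0)$, which after rescaling is $(k,1,0)$ with $k=\alpha^{1-q}$; then $k^{q^2+q+1}=\alpha^{-(q^3-1)}=1$. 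Conversely, $\alpha\mapsto\alpha^{1-q}$ surjects $\gfqc\setminus\{0\}$ onto the unique subgroup of order $q^2+q+1$, namely the $(q^2+q+1)$-th roots of unity, which are exactly $\{\tau^{(q-1)i}:0\le i<q^2+q+1\}$ because $\tau$ is primitive.

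For part~(\ref{B2-carriers}), I would use the $\sigma$-equivariance of the carrier construction. Writing $\ell_0=\sigma^{-1}(\li)=[\tau\tau^q,\,-(\tau+\tau^q),\,1]$, the conjugates of $\ell_0$ with respect to $\pi_0=\PG(2,q)$ are the Frobenius images $\ell_0^q,\,\ell_0^{q^2}$, because any element of $\PGL(3,q^3)$ fixing $\PG(2,q)$ is represented by a matrix with entries in $\gfq$ (so commutes with the Frobenius on line coordinates). Hence the carriers of $\Bpi$ on $\li$ are $\sigma(\ell_0\cap\ell_0^q)$ and $\sigma(\ell_0\cap\ell_0^{q^2})$. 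Two cross products give $\ell_0\cap\ell_0^q=(1,\tau^q,\tau^{2q})$ and $\ell_0\cap\ell_0^{q^2}=(1,\tau,\tau^2)$; applying $K$ to these column vectors produces, through immediate cancellations, $(-\tau+\tau^q,0,0)^t$ and $(0,\tau-\tau^q,0)^t$, which project to $(1,0,0)$ and $(0,1,0)$ as claimed.

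No step is deep; the only obstacle is linear-algebraic bookkeeping. The two key simplifications are (i) $[r,s,t]K'=[-\alpha^q,\alpha,t(\tau-\tau^q)]$ in part~(\ref{B2-splash}), and (ii) the vanishing of two of the three rows of $K(1,\tau^q,\tau^{2q})^t$ and $K(1,\tau,\tau^2)^t$ in part~(\ref{B2-carriers}). Both cancellations encode the identity $(X-\tau)(X-\tau^q)=X^2-(\tau+\tau^q)X+\tau\tau^q$, whose coefficients are precisely the entries of the bottom row of $K$.
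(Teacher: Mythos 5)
Your proposal is correct, and your part 2 is essentially identical to the paper's computation: both push a general line $[l,m,n]$ of $\pi_0$ through $K'$, collapse the first two coordinates to $-\alpha^q$ and $\alpha$ with $\alpha=l+m\tau+n\tau^2$, and read off the splash point $(\alpha^{1-q},1,0)$, whose $(q^2+q+1)$-th power is $\alpha^{-(q^3-1)}=1$. Where you genuinely differ is in parts 1 and 3: the paper simply cites \cite[Lemma 2.4]{BJ-ext1} for the facts that $\pi_0$ is exterior to $\ell=\sigma^{-1}(\li)=[-\tau\tau^q,\tau+\tau^q,-1]$ and has carriers $(1,\tau,\tau^2)$ and $(1,\tau^q,\tau^{2q})$ on $\ell$, and then applies $\sigma$; you instead prove both facts from scratch --- exteriority via the $\gfq$-linear independence of $1$, $\tau+\tau^q$, $\tau\tau^q$ (your determinant does reduce to $(\tau-\tau^q)(\tau-\tau^{q^2})(\tau^q-\tau^{q^2})\neq 0$), and the carriers by identifying the conjugate lines of $\ell$ with respect to $\pi_0$ as the Frobenius images $\ell^q,\ell^{q^2}$ and intersecting. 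Your computed intersection points and their images under $K$ agree with the paper's values, so the self-contained route costs a little extra bookkeeping but removes the external dependency; the one point you should make explicit is that $\ell,\ell^q,\ell^{q^2}$ are pairwise distinct (a Frobenius-fixed line would be the extension of a line of $\pi_0$ and hence meet $\pi_0$, contradicting exteriority), so that by the quoted structure of $I=\PGL(3,q^3)_{\pi_0,\ell}$ these really are its three fixed lines and your two intersection points are the carriers as defined.
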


\begin{proof} 
Note that $\sigma$ maps $\pi_0=\PG(2,q)$ to $\Bpi$ and maps the line $\elltau$ to $\li=[0,0,1]$. 
By \cite[Lemma 2.4]{BJ-ext1}, $\pi_0$ is exterior to $\ell$
and has carriers $\car=(1,\tau,\tau^2)$ and $\car^q=(1,\tau^q,\tau^{2q})$ on $\ell$. 
 Hence $\Bpi$ is exterior to $\li$ and has carriers $\sigma(\car)=(0,1,0)$ and $\sigma(\car^q)=(1,0,0)$ on $\li$. 

By considering the action of $\sigma$ on the 
lines $[l,m,n]$ ($l,m,n\in\gfq$, not all zero) of $\pi_0$, we calculate the 
 lines of $\Bpi$ are  $\ell_{l,m,n}=[-l-\tau^qm-\tau^{2q}n,\ l+\tau m+\tau^2n,\ n(\tau-\tau^q)]$,   $l,m,n\in\gfq$, not all zero.
The exterior splash of $\Bpi$ consists of the points $Q_{l,m,n}=\ell_{l,m,n}\cap\li=(l+\tau m +\tau^2 n,\ (l+\tau m +\tau^2n )^q,0)$. 
Writing $y=l+\tau m +\tau^2 n,$ gives $Q_{l,m,n}\equiv(y,y^q,0)\equiv (y^{1-q},1,0)$ and writing $y=\tau^{-j}$ for some $j\in\{0,\ldots,q^3-2\}$ yields $Q_{l,m,n}\equiv(\tau^{j(q-1)},1,0)$.  Note that if we write $j=n(q^2+q+1)+i$ where $0\le i< q^2+q+1$, then $\tau^{j(q-1)}=\tau^{i(q-1)}$. So we may assume that  $Q_{l,m,n}=(\tau^{i(q-1)},1,0)$ with $0\le i< q^2+q+1$. Finally, note that the solutions to $k^{q^2+q+1}=1$ are  $\tau^{i(q-1)}$, 
$0\le i< q^2+q+1$. 
\end{proof}

\section{The structure of the subplane in $\PG(6,q)$}\Label{sec:sub-str}

If $\pi$ is an exterior \orsp\ of $\PG(2,q^3)$, then in the Bruck-Bose representation in $\PG(6,q)$,  $\pi$ corresponds to a set of  $q^2+q+1$ affine points denoted $[\pi]$. It is difficult to characterise the structure of $[\pi]$.
We note that as $\pi$ contains $q^2+q+1$ \orsls\ that are exterior to $\li$, then by Theorem~\ref{sublinesinBB}, $[\pi]$ contains $q^2+q+1$ $\S$-special twisted cubics, each lying in a $3$-space
 through a distinct plane of the exterior splash of $\pi$.
In this section we aim to determine more about the structure of $[\pi]$.

\subsection{An exterior splash is the intersection of nine quadrics}

We show that the structure $[\pi]$ of $\PG(6,q)$ corresponding to an exterior \orsp\ $\pi$ of $\PG(2,q^3)$ is the intersection of nine quadrics in $\PG(6,q)$. This is analogous to \cite[Theorem 9.2]{BJ-tgt1} which shows that a {\em tangent} \orsp\ of $\PG(2,q^3)$  corresponds to a structure in $\PG(6,q)$ that is the intersection of nine quadrics.

\begin{theorem}\Label{nine-quadrics}
 Let $\pi$ be an exterior \orsp\ in $\PG(2,q^3)$. Then in $\PG(6,q)$, the corresponding set $[\pi]$ is the intersection of nine quadrics.
\end{theorem}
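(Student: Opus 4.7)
The plan is to reduce to the explicit exterior \orsp\ $\Bpi$ of Section~\ref{mapsection-Bpi2} and then cut $[\Bpi]$ out of $\PG(6,q)$ by the nine $\gfq$-components of the three $2\times 2$ minors of a matrix encoding the condition ``$\sigma^{-1}(P)\in\PG(2,q)$''. By Theorem~\ref{transsplashes} the subgroup of $\PGL(3,q^3)$ fixing $\li$ is transitive on exterior \orsps\ of $\li$, and every element of this subgroup induces a collineation of $\PG(6,q)$ fixing $\si$. Since collineations of $\PG(6,q)$ send quadrics to quadrics and preserve intersections, the property ``being the intersection of nine quadrics'' is invariant under this action, so it suffices to prove the statement for the specific \orsp\ $\Bpi=\sigma(\pi_0)$.

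Next, observe that $P=(\alpha,\beta,\gamma)\in\Bpi$ if and only if $\sigma^{-1}(P)\in\PG(2,q)$. Writing $(X,Y,Z)^t:=K'(\alpha,\beta,\gamma)^t$ with $K'$ from \eqref{matrixK}, this is equivalent to the rank condition
$$\mathrm{rank}\begin{pmatrix}X & Y & Z\\ X^q & Y^q & Z^q\end{pmatrix}\le 1,$$
namely the three $\gfqc$-equations $XY^q-X^qY=0$, $XZ^q-X^qZ=0$, $YZ^q-Y^qZ=0$. Substituting $\alpha=x_0+x_1\tau+x_2\tau^2$, $\beta=y_0+y_1\tau+y_2\tau^2$, $\gamma=z$, expanding each minor in the $\gfq$-basis $\{1,\tau,\tau^2\}$, and collecting coefficients writes each minor as $R_0+R_1\tau+R_2\tau^2$ for $\gfq$-homogeneous quadratic forms $R_0,R_1,R_2$ in $(x_0,\ldots,y_2,z)$. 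This produces nine quadrics $Q_1,\ldots,Q_9$ of $\PG(6,q)$ which, by construction, all vanish on the $q^2+q+1$ points of $[\Bpi]$.

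Finally, I would prove $\bigcap_i Q_i=[\Bpi]$. Any point with $z\ne 0$ satisfying all nine equations corresponds via the Bruck--Bose bijection to an affine $P\in\PG(2,q^3)$ whose associated triple $(X,Y,Z)$ satisfies the rank condition, forcing $\sigma^{-1}(P)\in\PG(2,q)$ and hence $P\in\Bpi$. For $z=0$, $Z$ specialises to $Z_0=-\tau^{2q}\alpha+\tau^2\beta$, so $(X,Y,Z_0)$ lies in the $\gfqc$-plane $W=\langle v_1,v_2\rangle\subset\gfqc^3$ with $v_1=(-1,-\tau^q,-\tau^{2q})$ and $v_2=(1,\tau,\tau^2)$, while $(X^q,Y^q,Z_0^q)$ lies in $W^q$. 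A direct computation gives $v_2^q=-v_1$ and $v_1^q\notin W$, so $W\cap W^q=\gfqc\cdot v_1$ is one-dimensional; proportionality of the two rows of the $2\times 3$ matrix then forces $\beta=0$, and reapplying the argument to the other row forces $\alpha=0$, leaving no projective point of $\si$. Hence no spurious common zeros at infinity arise, and $\bigcap_i Q_i=[\Bpi]$.

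The main obstacle is this final analysis at $z=0$: one must rule out extraneous common zeros in $\si$, which relies on the specific structure of $K'$ and on the computation $W\cap W^q=\gfqc\cdot v_1$. A secondary technical concern is verifying that the nine forms $R_i$ produced above are indeed nine genuinely nontrivial quadrics rather than collapsing to fewer; should some collapse occur, one would need to adjust the choice of minor components (for instance replacing them with well-chosen $\gfq$-linear combinations) without changing the zero set.
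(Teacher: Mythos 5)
Your proposal is essentially the paper's own proof: the same reduction to $\Bpi$ via Theorem~\ref{transsplashes}, and the same nine quadrics --- your three $2\times 2$ minors $XY^q-X^qY$, $XZ^q-X^qZ$, $YZ^q-Y^qZ$ are exactly the paper's conditions (\ref{alt-third}), (\ref{alt-second}), (\ref{alt-fourth}) with denominators cleared, each split into three $\gfq$-components in the basis $1,\tau,\tau^2$. The one place you go beyond the paper is the explicit check that no point of $\si$ (the case $z=0$) lies on all nine quadrics; the paper's proof analyses only affine points $(x,y,1)$, and your computation $W\cap W^q=\gfqc\cdot v_1$ correctly closes that small gap.
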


\begin{proof} By Theorem~\ref{transsplashes}, we can without loss of generality prove this for the \orsp\ $\Bpi$ coordinatised in Section~\ref{mapsection-Bpi2}.
We use the homographies $\sigma, \sigma^{-1}$ with matrices $K,K'$ respectively, given in (\ref{matrixK}).
A point $P=(x,y,1)\in\PG(2,q^3)$ belongs to  $\Bpi$ if its pre-image $K'P= (\CX,\ \CY,\ \CZ)$ belongs to $\pi_0=\PG(2,q)$.  Suppose firstly that $\CX\ne 0$, then
\[
K' P\equiv\left(1, \ \frac{\CY}{\CX},\ \frac{\CZ}{\CX}\right).
\]
This belongs to $\pi_0=\PG(2,q)$ if and only if the second and third coordinates belong to $\gfq$, that is,
\begin{eqnarray}
\left(\frac{\CY}{\CX}\right)^q&=&\frac{\CY}{\CX}\label{alt-first},\\
\left(\frac{\CZ}{\CX}\right)^q&=&\frac{\CZ}{\CX}\label{alt-second}.
\end{eqnarray}
Writing $x=x_0+x_1\tau+x_2\tau^2$ and $y=y_0+y_1\tau+y_2\tau^2$, where $x_i,y_i\in\gfq$, $i=1,2,3$, then equating powers of $1,\tau,\tau^2$, yields three  quadratic equations from each condition, a total of six,  each of which represents a quadric in $\PG(6,q)$.

Secondly, suppose $\CY\ne 0$, then 
\[
K' P\equiv\left(\frac{\CX}{\CY},\ 1,\ \frac{\CZ}{\CY}\right).
\]
As before, this lies in $\pi_0$ 
if and only if 
\begin{eqnarray}
\left(\frac{\CX}{\CY}\right)^q&=&\frac{\CX}{\CY}\label{alt-third},\\
\left(\frac{\CZ}{\CY}\right)^q&=&\frac{\CZ}{\CY}\label{alt-fourth},
\end{eqnarray}
leading to a further six quadrics in $\PG(6,q)$.
The equations (\ref{alt-first}) and (\ref{alt-third}) give the same triple of quadrics. Hence 
the point $P$ lies in $\Bpi$ if and only if the point $[P]$ lies on 
 a total of nine quadrics in $\PG(6,q)$.
Finally, note that if both $\CX=0$ and $\CY=0$, then $x=y=0$ and the point $P$ has coordinates $(0,0,1)$. This satisfies all the quadratic equations  from (\ref{alt-first}), (\ref{alt-second}), (\ref{alt-fourth}), and so in $\PG(6,q)$,  $[P]$ lies on each of the nine quadrics.
\end{proof}

\subsection{Tangent planes at points of an exterior subplane}
\Label{sec:def-tgt-pl}

We now consider a point $P$ lying in an  exterior \orsp\ $\pi$ of $\PG(2,q^3)$. 
In the Bruck-Bose representation in $\PG(6,q)$, $P$ corresponds to an affine point which we   also denote by $P$. We show that in $\PG(6,q)$,
there is a unique {\em tangent plane} $\TP$ at $P$ to the structure $[\pi]$. We show that there are two equivalent  ways to define this tangent plane. Recall from Theorem~\ref{sublinesinBB} that the \orsls\ of $\pi$ correspond to twisted cubics in $\PG(6,q)$. Theorem~\ref{tgtpleqn} shows that we can define $\TP$ by looking at the tangent lines at $P$ to these twisted cubics. Then Theorem~\ref{tgt-space} shows that we can define $\TP$ by looking at the tangent space of $P$ with respect to the nine quadrics defined by $[\pi]$.

\begin{theorem}\Label{tgtpleqn}
Let $\pi$ be an exterior \orsp\ of $\PG(2,q^3)$, and let $P$ be a point of $\pi$. Label the lines of $\pi$ through $P$ by $\ell_0,\ldots,\ell_q$. In $\PG(6,q)$, $\ell_i$ corresponds to a twisted cubic $[\ell_i]$. Let $m_i$ be the unique tangent line to $[\ell_i]$ through $P$. Then the lines $m_0,\ldots,m_q$ lie in a plane $\TP$, called the {\sl tangent plane} of $[\pi]$ at $P$.
\end{theorem}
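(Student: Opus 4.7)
The plan is to use the transitivity of $\PGL(3,q^3)_{\li}$ on pairs $(\pi,P)$ to reduce to a canonical choice, then to parametrize each twisted cubic $[\ell_i]$ through $P$, compute the tangent direction as a formal derivative, and observe a $\gfq$-linearity that forces all $q+1$ tangent lines into a common plane through $P$. By Theorem~\ref{transsplashes} and Theorem~\ref{propthm}, I may take $\pi=\Bpi$ and $P=P_0=(0,0,1)=\sigma((0,0,1)^t)$, which lies in $\Bpi$ since $(0,0,1)$ is fixed by $\sigma$. The lines of $\pi_0=\PG(2,q)$ through $(0,0,1)$ are the $q+1$ lines $[a,b,0]$ with $(a:b)\in\PG(1,q)$, and the \orsl\ through $P_0$ in $\Bpi$ on the line $\sigma([a,b,0])$ consists of the images
$$\sigma\bigl((sb,-sa,1)^t\bigr)=\bigl(-s(a+b\tau),\ -s(a+b\tau^q),\ 1+sc\bigr),\qquad s\in\gfq,$$
where $c=a(\tau+\tau^q)+b\tau^{q+1}\in\gfqc$, together with a single further point.

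To push this family through the Bruck-Bose map $\epsilon$ the third coordinate must lie in $\gfq$. Rather than normalize to $z=1$ (which would introduce the denominator $(1+sc)^{-1}$ with $c\in\gfqc$ and obscure the $\gfq$-structure), I would multiply through by $z^{q+q^2}=(1+sc^q)(1+sc^{q^2})$, valid because $s\in\gfq$, so that the third coordinate becomes the norm $N(1+sc)\in\gfq$. This yields a polynomial parametrization of the twisted cubic $[\ell_{a,b}]$,
$$P(s)=\Bigl(\bigl[-s(a+b\tau)(1+sc^q)(1+sc^{q^2})\bigr],\ \bigl[-s(a+b\tau^q)(1+sc^q)(1+sc^{q^2})\bigr],\ N(1+sc)\Bigr),$$
with $P(0)=P_0$ and each coordinate lying in $\gfq[s]$ of degree at most three. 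Reading off the $s$-coefficient in each slot, and using the $\gfq$-linearity of $[\,\cdot\,]$, gives the tangent direction
$$P'(0)=V(a,b)=\bigl([-(a+b\tau)],\ [-(a+b\tau^q)],\ \mathrm{Tr}(c)\bigr),$$
where $\mathrm{Tr}\colon\gfqc\to\gfq$ is the relative trace, so $m_{a,b}=\langle P_0,V(a,b)\rangle$.

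Finally, because $(a,b)\mapsto c$, $[\,\cdot\,]$, and $\mathrm{Tr}$ are each $\gfq$-linear, $V(a,b)$ is $\gfq$-linear in $(a,b)$, whence $V(a,b)=aV_1+bV_2$ with $V_1:=V(1,0)$ and $V_2:=V(0,1)$ fixed vectors in $\gfq^7$ depending only on $\tau$. Every tangent $m_{a,b}=\langle P_0,V(a,b)\rangle$ therefore lies in the common subspace $\TP:=\langle P_0,V_1,V_2\rangle$, and a short linear-independence check on $\{P_0,V_1,V_2\}$ (the first three coordinates of $V_1$ are $(-1,0,0)$, the middle three of $V_2$ are $-[\tau^q]$, while $P_0$'s only nonzero coordinate is the last) confirms $\TP$ is genuinely a plane. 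The main obstacle is the normalization step: the $\gfqc$-coordinate $1+sc$ must be sent through the Bruck-Bose map without destroying the $\gfq$-structure, and the trick of scaling by the norm multiplier $z^{q+q^2}$ rather than by $z^{-1}$ is what keeps the coordinates polynomial over $\gfq$ and makes the linearity of $V$ in $(a,b)$ apparent.
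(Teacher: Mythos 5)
Your proposal is correct and follows essentially the same route as the paper: reduce via Theorems~\ref{transsplashes} and~\ref{propthm} to $\Bpi$ and $P=(0,0,1)$, parametrise each image cubic after clearing the third coordinate to a norm, and observe that the resulting tangent directions are $\gfq$-linear in the coordinates of the line through $P$, hence span a plane with $P$. The only (immaterial) difference is that you obtain the tangent as the formal derivative at $s=0$, whereas the paper places $P$ at the infinite parameter value and takes the limit of the chords $PP_x$ in $\si$ — the same computation in a different affine chart.
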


\begin{proof} By Theorems~\ref{transsplashes} and \ref{propthm}, we can without loss of generality prove this for the \orsp\ $\Bpi$ coordinatised in Section~\ref{mapsection-Bpi2}, and the point $P=(0,0,1)$ of $\Bpi$. First consider the \orsp\ $\pi_0=\PG(2,q)$. The point 
$P=(0,0,1)$ lies in $\pi_0$, and the lines of $\pi_0$ through $P$ have coordinates $\ell_m'=[m,1,0]$, $m\in\gfq\cup\{\infty\}$. Points on the line $\ell_m'$ distinct from $P$ have coordinates $P_x'=(1,-m,x)$ for $x\in\gfq$.
We map the plane $\pi_0$ to $\Bpi$ using the homography $\sigma$ with matrix $K$ given in (\ref{matrixK}).
As $\sigma(P)=P$, the lines of $\Bpi$ through $P$ are $\ell_m=\sigma(\ell_m')$, $m\in\gfq\cup\{\infty\}$. Points on the line $\ell_m$ distinct from $P$ have coordinates $P_x=\sigma(P_x')=(-\tau-m,\ -\tau^q-m,\ \tau\tau^q+(\tau+\tau^q)m+x)$, for $x\in\gfq$.

To convert this to a coordinate in $\PG(6,q)$, we need to multiply by an element of $\gfqc$ so that the last coordinate lies in $\gfq$. Let $F(x)=\tau\tau^q+(\tau+\tau^q)m+x$ (the third coordinate in $P_x$). As $F(x)\in\gfqc$, we have $F(x)^{q^2+q+1}\in\gfq$. So in $\PG(6,q)$, we have the point 
$
P_x= ([-(\tau+m)F(x)^{q^2+q}],\ [-(\tau^q+m)F(x)^{q^2+q}],\ F(x)^{q^2+q+1})
$.

By Theorem~\ref{sublinesinBB}, the line $\ell_m$ of $\PG(2,q^3)$ corresponds to a 
 twisted cubic $[\ell_m]=\{P_x\st x\in\gfq\}\cup\{P\}$ of $\PG(6,q)$. Consider the 
 unique tangent to $[\ell_m]$ through $P$, and let $I_m$ be the intersection of this tangent with $\si$. We will show that the points $I_m$, $m\in\gfq\cup\{\infty\}$  form a line.
To find the point of intersection $I_m$, we let $Q_x=PP_x\cap\Sigma_\infty$, then let $x\rightarrow\infty$ and calculate $I_m=Q_\infty$.
\begin{eqnarray*}
I_m&=&\lim_{x\rightarrow\infty} PP_x\cap\Sigma_\infty
=\lim_{x\rightarrow\infty} ([-(\tau+m)F(x)^{q^2+q}],[-(\tau^q+m)F(x)^{q^2+q}],0)\\
&=&([-(\tau+m)],\ -[\tau^q+m], 0).
\end{eqnarray*}
Hence the points $I_m$, $m\in\gfq\cup\{\infty\}$ form a line $\ell=\langle
([1],[1],0),([\tau],[\tau^q],0)\rangle$ in $\si$.
 Hence the tangent lines $m_0,\ldots,m_q$ to the twisted cubics of $[\pi]$ through $P$ form a plane $\TP=\langle \ell, P\rangle$ through $P$, as required.
\end{proof}

\begin{theorem}\Label{tgt-space}
Let $\pi$ be an exterior \orsp\ of $\PG(2,q^3)$, and 
let $P$ be a point of $\pi$. In $\PG(6,q)$, consider the 
intersection of the tangent spaces at $P$ of the nine quadrics corresponding to $[\pi]$.  Then this intersection is equal to the tangent plane $\TP$ of $[\pi]$ at $P$ as defined in Theorem~{\rm \ref{tgtpleqn}}.
\end{theorem}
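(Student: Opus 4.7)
The plan is, as in the proof of Theorem~\ref{tgtpleqn}, to use Theorems~\ref{transsplashes} and~\ref{propthm} to reduce to the coordinatised subplane $\Bpi$ of Section~\ref{mapsection-Bpi2} and the point $P=(0,\ldots,0,1)\in\PG(6,q)$. The containment of $\TP$ in the intersection of the nine tangent spaces is the easy direction, while the reverse containment will come from a rank computation on the gradients of the nine quadrics at $P$.

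For the easy direction, Theorem~\ref{sublinesinBB}(\ref{FFA-orsl}) ensures that each of the $q+1$ lines of $\Bpi$ through $P$ corresponds to a twisted cubic $[\ell_i]\subseteq[\Bpi]$, which lies on every one of the nine quadrics of Theorem~\ref{nine-quadrics}. Hence the tangent line $m_i$ to $[\ell_i]$ at $P$ lies in the tangent space at $P$ of each of these quadrics (a standard fact where the quadric is smooth at $P$, and vacuous where it is singular at $P$, since its tangent space is then the whole of $\PG(6,q)$). So $\TP=\langle m_0,\ldots,m_q\rangle$ lies in the intersection.

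For the reverse, I would differentiate the three $\gfqc$-valued defining equations $\CY^q\CX-\CY\CX^q=0$, $\CZ^q\CX-\CZ\CX^q=0$ and $\CZ^q\CY-\CZ\CY^q=0$ (with $\CX,\CY,\CZ$ as in the proof of Theorem~\ref{nine-quadrics}) at $P$. Since $\CX(P)=\CY(P)=0$ while $\CZ(P)=\tau-\tau^q=:b$, and hence $\CZ^q(P)=b^q$, the first equation has zero gradient at $P$, so its three $\gfq$-quadrics are singular at $P$ and impose no constraint; the other two equations give gradients at $P$ equal to the $\gfq$-components of the $\gfqc$-valued linear forms $L^{(2)}=b^q\CX-b\CX^q$ and $L^{(3)}=b^q\CY-b\CY^q$. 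These yield six $\gfq$-linear forms in $(x_0,\ldots,y_2)$, none of which involves $z$, and it suffices to show that they have joint rank exactly $4$: their common zero locus will then be a projective $2$-plane, which by the easy direction must equal $\TP$.

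The rank computation is the main obstacle. Let $\lambda\colon\gfqc\to\gfqc$ be the $\gfq$-linear map $\lambda(t)=b^qt-bt^q$; its kernel is $\{t:t^{q-1}=b^{q-1}\}\cup\{0\}=b\cdot\gfq$, a one-dimensional $\gfq$-subspace. This step relies on the identity $b^q=\tau^q-\tau^{q^2}$, without which $\ker\lambda$ would be trivial and the joint rank would come out to $6$ rather than $4$. Because $(\CX,\CY)$ is an invertible $\gfqc$-linear function of $(x,y)$, the joint kernel of $L^{(2)},L^{(3)}$ on $\gfqc^2$ is carried by this isomorphism onto $\ker\lambda\times\ker\lambda$, which has $\gfq$-dimension $2$; hence the joint rank of the six $\gfq$-forms is $6-2=4$, as required.
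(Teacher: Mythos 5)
Your proof is correct and takes essentially the same approach as the paper: both reduce to $\Bpi$ with $P=(0,0,1)$ and extract the linear (tangency) conditions of the nine quadrics at $P$, finding that the triple coming from the $\CX$, $\CY$ equation imposes nothing while the other two triples force $-x+y$ and $-\tau^qx+\tau y$ to lie in $(\tau-\tau^q)\gfq$. The only difference is in the finish: the paper solves these conditions explicitly and matches the resulting line $\langle([1],[1],0),([\tau],[\tau^q],0)\rangle$ against Theorem~\ref{tgtpleqn}, whereas you combine the easy containment of $\TP$ in the intersection with a rank/kernel dimension count --- both are valid.
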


\begin{proof}
By Theorems~\ref{transsplashes} and~\ref{propthm}, we can without loss of generality prove this for the \orsp\ $\Bpi$ coordinatised in Section~\ref{mapsection-Bpi2}, and the point $P=(0,0,1)$ of $\Bpi$. In $\PG(6,q)$, consider the nine  quadrics corresponding to $[\Bpi]$ which are given in equations (\ref{alt-second}), (\ref{alt-third}) and (\ref{alt-fourth}). 
We want to find  the set of lines through $P$ that meet each of these nine quadrics twice at $P$.
 Every line $\ell$ of $\PG(6,q)$ through $P$ has form $\ell=RP$ for some point 
$R=([u],[v],0)\in\si$, $u,v\in\gfqc$.  So the points of $\ell$ are of the form $P_s=P+sR=([su],[sv],1)$ where $s\in\gfq$.  
Substituting 
the point $P_s$ into the quadrics of (\ref{alt-second})
gives
\begin{eqnarray*}
(-\tau^{2q} su+\tau^{2q} sv+(\tau-\tau^q))^q(-su+sv)&=&(-\tau^{2q} su+\tau^2 sv+(\tau-\tau^q))(-su+sv)^q.
\end{eqnarray*}
This expression is a polynomial of degree two in $s$.  
The line $\ell=PR$ is tangent to the three quadrics of (\ref{alt-second}) if this expression has 
a repeated root $s=0$, that is, if the coefficient of $s$  is equal to zero.  That is,
$(\tau-\tau^q)^q(-u+v)=(\tau-\tau^q)(-u+v)^q$, and so $k=(-u+v)/(\tau-\tau^q)$ is in $\gfq$. Rearranging gives 
$
v=k(\tau-\tau^q)+u
$.
Substituting 
the point $P_s$ into the quadrics of (\ref{alt-third})
gives no constraints. 
Substituting 
the point $P_s$ into the quadrics of (\ref{alt-fourth})
and simplifying gives the constraint 
 $m=(-\tau^q u+ \tau v)/(\tau-\tau^q)$ lies in $\gfq$, and so  $v=(m(\tau-\tau^q)+\tau^qu)/\tau$. Equating this with the expression for $v$ obtained from (\ref{alt-second}) above gives  $u=m-k\tau$, and so $v=m-k\tau^q$. 
 Hence the line $\ell=PR$ is tangent to all nine quadrics when $R$ has form 
$$R=([u],[v],0)=([m-k\tau],[m-k\tau^q],0)
=m([1],[1],0)-k([\tau],[\tau^q],0).
$$
Thus the tangent space to $[\Bpi]$ at $P$  is the plane through $P$ and the line $\ell=
\langle([1],[1],0), ([\tau],[\tau^q],0)\rangle
$ of $\Sigma_\infty$. This is the same as the tangent plane $\TP$ to $[\Bpi]$ at $P$ calculated in  the proof of Theorem~\ref{tgtpleqn}.
\end{proof}

\section{Coordinatising the exterior splash and its covers}\Label{sec:coord-cover}

Let $\ES$ be an exterior splash of $\PG(1,q^3)$. In the Bruck-Bose representation, $\ES$ corresponds to a set of $q^2+q+1$ planes of the regular $2$-spread $\S$ in $\si\cong\PG(5,q)$. To simplify the notation, we use  the same symbol $\ES$ to denote both the points of the exterior splash on $\li$, and the planes of the exterior splash contained in $\S$.
In \cite{BJ-ext1}, we show that an exterior splash is projectively equivalent to a cover of the circle geometry CG$(3,q)$. Hence by 
Bruck~\cite{bruc73b}, there are two \emph{switching sets}
$\EX$, $\EY$ for $\ES$. That is, $\EX$ and $\EY$ consist of $q^2+q+1$ planes each, such that the planes of the three sets $\ES$, $\EX$ and $\EY$ each cover the same set of points. Further, planes from different sets meet in unique points, and planes in the same set are disjoint. 
The three sets $\ES,\EX,\EY$ are called 
 {\em hyper-reguli} in \cite{culbert,ostrom}.
In this article, we call the families $\EX$ and  $\EY$ \emph{covers of the exterior splash} $\ES$. 

In this section we take the \orsp\ $\Bpi$ 
coordinatised in Section~\ref{mapsection-Bpi2}, with exterior splash $\ES$, and use \cite{ostrom} to calculate the coordinates of the two covers of $\ES$. We denote the covers corresponding to $\Bpi$ by $\ET$ and $\EC$. The cover denoted $\ET$ in the following lemma is  called the {\em tangent cover} of $\ES$ with respect to $\Bpi$, or the tangent cover of $[\Bpi]$.
The cover  denoted $\EC$ in the following lemma is  called  {\em conic cover} of $\ES$ with respect to $\Bpi$, or the conic cover of $[\Bpi]$. The nomenclature for tangent covers comes from  Theorem~\ref{def:tgt-cover} which shows that the tangent planes $\TP$ of $[\Bpi]$ meet $\si$ in  lines that lie in distinct planes of the cover $\ET$.  The nomenclature for the conic cover comes from \cite{BJ-ext3} which shows that the planes in the cover
$\EC$ are related to the $(\Bpi,\li)$-special conics in $\Bpi$.

 \begin{lemma}\Label{coord-covers-new}
Let $\ES$ be the exterior splash of the exterior \orsp\ $\Bpi$ coordinatised in Section~{\rm\ref{mapsection-Bpi2}}. 
Let ${\mathscr K}=\{\k=\tau^{i(q-1)}\st 0\leq i<q^2+q+1\}$. In $\PG(6,q)$,  $\ES$ and its two covers $\ET,\EC$ have planes given by 
\begin{eqnarray*}
\ES&=&\{[S_\k]=\{([\k x],[x],0)\st x\in\gfqcs\}\st \k\in\mathscr{K}\}\\
\ET&=&\{[T_\k]=\{([\k x],[x^q],0)\st x\in\gfqcs\}\st \k\in\mathscr{K}\}\\
\EC&=&\{[C_\k]=\{([\k x],[x^{q^2}],0)\st x\in\gfqcs\}\st \k\in\mathscr{K}\}.
\end{eqnarray*}
\end{lemma}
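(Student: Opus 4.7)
The plan is to derive the coordinates of $\ES$ directly from Theorem~\ref{phi-prop}(2), and then use the hyper-regulus replacement theory of \cite{ostrom} (or an equivalent direct verification) to obtain the two covers $\ET$ and $\EC$.

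First I would tackle $\ES$. By Theorem~\ref{phi-prop}(\ref{B2-splash}), the exterior splash of $\Bpi$ on $\li$ consists of the points $L_\k=(\k,1,0)$ with $\k\in\mathscr K$. From the general coordinate formula for the regular $2$-spread recalled immediately before Lemma~\ref{S-coord-g}, the spread element corresponding to the infinite point $L_\k=(\k,1,0)$ is exactly $\{([\k x],[x],0)\st x\in\gfqcs\}$. Identifying this with $[S_\k]$ and letting $\k$ range over $\mathscr K$ gives the claimed description of $\ES$ in $\PG(6,q)$.

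Next I would construct the covers. The underlying point set covered by $\ES$ is $U=\bigcup_{\k\in\mathscr K}[S_\k]=\{([\k x],[x],0)\st \k\in\mathscr K,\ x\in\gfqcs\}$. For each $\k\in\mathscr K$, define the candidate cover planes $[T_\k]=\{([\k x],[x^q],0)\st x\in\gfqcs\}$ and $[C_\k]=\{([\k x],[x^{q^2}],0)\st x\in\gfqcs\}$. Both maps $x\mapsto[\k x]$ (multiplication by $\k$) and $x\mapsto[x^{q^i}]$ (Frobenius) are $\gfq$-linear bijections on $\gfqc\cong\gfq^3$, so the images are genuine planes of $\si$, each containing $q^2+q+1$ projective points. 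To show $\bigcup_\k[T_\k]=U$, rewrite a point of $[T_\k]$ by substituting $y=x^q$, so $x=y^{q^2}$ and the point becomes $([\k y^{q^2}],[y],0)$. This lies in $[S_{\k'}]$ for $\k'=\k y^{q^2-1}$, and a short order calculation using $\gcd(q^3-1,q^2-1)=q-1$ shows $y^{q^2-1}$ ranges over $\mathscr K$, so $\k'\in\mathscr K$ and $U$ is indeed recovered; an identical calculation handles $\EC$.

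The main obstacle — and the only place any real verification is needed — is to check the switching set properties: the planes within each family are pairwise disjoint, and a plane from one family meets a plane from a different family in exactly one point. For disjointness within $\ET$, suppose $([\k_1 x_1],[x_1^q],0)=\lambda([\k_2 x_2],[x_2^q],0)$ for some $\lambda\in\gfqs$. Using that $x\mapsto[x]$ is a $\gfq$-linear bijection, this forces $x_1^q=\lambda x_2^q$ and $\k_1 x_1=\lambda\k_2 x_2$. Setting $\mu=x_1/x_2$ gives $\mu^q=\lambda\in\gfq$, so $\mu^{q-1}=1$, hence $\mu\in\gfqs$, $\lambda=\mu$, and then $\k_1=\k_2$. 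The same argument works for $\EC$. For the crossing incidence, equating a point of $[T_{\k_1}]$ with a point of $[S_{\k_2}]$ reduces to solving $x_1^{q-1}=\k_1/\k_2$; since $\mathscr K$ equals the image of the $(q-1)$-power map on $\gfqcs$ and $\k_1/\k_2\in\mathscr K$, this equation has a solution in $x_1$ unique up to $\gfqs$, giving exactly one intersection point. Handling the $\ES$–$\EC$ and $\ET$–$\EC$ pairings by the same substitution trick completes the verification that $\ET$ and $\EC$ are the two covers of $\ES$ guaranteed by \cite{bruc73b,ostrom}.
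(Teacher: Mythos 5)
Your proposal is correct, and the coordinates you produce for $[S_\k]$, $[T_\k]$, $[C_\k]$ are exactly those in the paper; the difference lies entirely in how the switching-set properties are established. The paper generates the two covers by applying the order-three homography $\beta\colon([x],[y])\mapsto([x],[y^q])$ of $\si$, so that $\beta$ cycles $\ES\mapsto\ET\mapsto\EC$, and then simply cites Ostrom's hyper-regulus paper \cite{ostrom} for the facts that the three families cover the same point set, that planes within a family are disjoint, and that planes from different families meet in single points. You instead verify all of these claims directly by field arithmetic: the union computation via the substitution $y=x^q$ together with $\gcd(q^2-1,q^3-1)=q-1$, disjointness within a family via $\mu^q=\lambda\in\gfqs\Rightarrow\mu\in\gfqs$, and the crossing incidences via solvability of $x^{q-1}=\k_1/\k_2$ (and $x^{q^2-1}=\k_1/\k_2$) with kernel $\gfqs$. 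Your route buys a self-contained, elementary proof at the cost of about a page of computation; the paper's route is shorter and, more importantly, sets up the homography $\beta$ as a reusable tool — it reappears in the proofs of Theorems~\ref{STC-coord} and~\ref{gs-gt-gc} to transport the transversal line $\gs$ to $\gt$ and $\gc$ — so the citation is not merely a shortcut but part of the paper's later machinery. The only step you leave slightly implicit is that containment $\bigcup_\k[T_\k]\subseteq U$ upgrades to equality, which follows by counting once pairwise disjointness within $\ET$ is known ($(q^2+q+1)^2$ points on each side); this is a one-line remark, not a gap.
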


\begin{proof} 
The points of $\li$ in $\PG(2,q^3)$ have coordinates $S_\k=(\k,1,0)$ for $\k\in\gfqc\cup\{\infty\}$. Hence in the Bruck-Bose representation of $\li$ in $\si\cong\PG(5,q)$,  planes of the regular $2$-spread $\S$ are given by
$
[S_\k]=\{([\k x],[x])\st x\in\gfqcs\},
$
for $\k\in\gfqc\cup\{\infty\}$.
Consider the homography $\beta$ (of order 3) of $\si\cong\PG(5,q)$ defined by:
\begin{eqnarray}\label{def:beta}
\beta: ([x],[y])\rightarrow([x],[y^q]).
\end{eqnarray}
We consider the action of $\beta$ on the planes of $[S_\k]$. For each $\k\in\gfqc\cup\{\infty\}$, define the planes $[T_k],[C_k]$ by  $\beta([S_\k])=[T_\k]$ and $\beta([T_\k])=[C_\k]$. That is,
$[T_\k]=\{([\k x],[x^q])\st x\in\gfqcs\}$, and $
[C_\k]=\{([\k x],[x^{q^2}])\st x\in\gfqcs\}$.

We now consider the exterior \orsp\ $\Bpi$ coordinatised in Section~\ref{mapsection-Bpi2} which by Theorem~\ref{phi-prop} has exterior splash $\ES=\{S_k=(\k,1,0)\st \k\in\mathscr{K}\}\subset\li$, and carriers $S_\infty=(1,0,0)$, $S_0=(0,1,0)$. 
Note that
in $\PG(5,q)$, the carriers of $\Bpi$ lie in each of the three sets of planes, as $[S_0]=[T_0]=[C_0]$ and 
 $[S_\infty]=[T_\infty]=[C_\infty]$.
 In $\PG(5,q)$, we have $\ES=\{[S_\k]\st \k\in{\mathscr K}\}$. Let $\ET=\{[T_\k]\st \k\in{\mathscr K}\}$ and
$\EC=\{[C_\k]\st \k\in{\mathscr K}\}$, then 
$\beta:\ES\mapsto\ET\mapsto\EC$.
By 
\cite{ostrom}, the sets $\ES$, $\ET$, $\EC$ cover the same set of points. Moreover, planes in the same set are disjoint, and planes from different sets meet in one point. That is, $\ET$ and $\EC$ are the two covers of $\ES$.
\end{proof}

The next lemma demonstrates a useful homography of $\PG(6,q)$ that acts regularly on the cover planes in each cover of $\Bpi$. 
Recall that $\tau$ is a zero of the primitive polynomial $x^3-t_2x^2-t_1x-t_0$.

\begin{lemma}\Label{theta-action}
Let $\ES$ be the  exterior splash of the exterior \orsp\ $\Bpi$ coordinatised in Section~{\rm \ref{mapsection-Bpi2}} with covers $\EC$ and $\ET$ coordinatised in Lemma~{\rm \ref{coord-covers-new}}.
Consider  the homography $\Theta\in\PGL(7,\r)$ with $7\times 7$ matrix
\[
\begin{pmatrix}M&0&0\\0&M&0\\0&0&1\end{pmatrix}, \quad{\rm where}\ \ 
M=\begin{pmatrix}0&0&t_0\\1&0&t_1\\0&1&t_2\end{pmatrix}.
\]
Then in $\PG(6,q)$, $\Theta$ fixes each plane of the regular $2$-spread $\S$; maps the cover plane $[C_k]\in\EC$ to $[C_{\tau^{1-q} k}]\in\EC$; and maps the cover plane $[T_k]\in\ET$ to $[T_{\tau^{1-q^2} k}]\in\ET$, $\k\in\mathscr{K}$.
\end{lemma}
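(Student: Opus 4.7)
The plan is to recognise that the $3\times 3$ block $M$ is the companion matrix of the primitive polynomial $x^3-t_2x^2-t_1x-t_0$ of $\tau$, and so $M$ is exactly the matrix $\MM_\tau$ of Section~\ref{sec:eg:Mk} that represents multiplication by $\tau$ on coordinate vectors $[x]$. Consequently $\Theta$ acts on a point $([x],[y],z)$ of $\PG(6,q)$ by sending it to $([\tau x],[\tau y],z)$, and each of the three claims can be verified by the same template: apply $\Theta$ to a generic point of the plane in question and then reparametrise the free parameter $x\in\gfqcs$ so that the image is visibly a point of the target plane.

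For the spread claim, I apply $\Theta$ to a generic point $([kx],[x],0)$ of $[S_k]$ to obtain $([k(\tau x)],[\tau x],0)$. Setting $x'=\tau x\in\gfqcs$ exhibits the image as $([k x'],[x'],0)\in[S_k]$, and since $x\mapsto\tau x$ is a bijection of $\gfqcs$, $\Theta$ fixes $[S_k]$ setwise. For the conic cover, I apply $\Theta$ to $([kx],[x^{q^2}],0)\in[C_k]$, giving $([\tau kx],[\tau x^{q^2}],0)$. The key reparametrisation is $x'=\tau^q x$: then $(x')^{q^2}=\tau^{q^3}x^{q^2}=\tau x^{q^2}$ (using $\tau^{q^3}=\tau$ in $\gfqc$), and the first coordinate becomes $\tau kx=\tau^{1-q}k x'$, so the image is $([\tau^{1-q}k x'],[(x')^{q^2}],0)\in[C_{\tau^{1-q}k}]$. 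The tangent cover is handled by the symmetric substitution $x'=\tau^{q^2} x$, under which $(x')^q=\tau x^q$ and $\tau kx=\tau^{1-q^2}k x'$, giving $\Theta([T_k])=[T_{\tau^{1-q^2}k}]$.

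A small auxiliary check that I would include is that the new indices remain in $\mathscr{K}$: since $\mathscr{K}$ coincides with the group of $(q^2+q+1)$-th roots of unity in $\gfqcs$, and
\[
(\tau^{1-q})^{q^2+q+1}=\tau^{-(q^3-1)}=1, \qquad (\tau^{1-q^2})^{q^2+q+1}=\tau^{-(q^3-1)(q+1)}=1,
\]
multiplication by either scalar permutes $\mathscr{K}$, so $\Theta$ genuinely acts by permutation on $\EC$ and on $\ET$ (and in fact as a single cycle of length $q^2+q+1$, though that is not needed here).

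I do not anticipate a serious obstacle: once the identification $M=\MM_\tau$ is made, the only real decision is choosing the correct Frobenius twist $x'\in\{\tau^q x,\tau^{q^2}x\}$ so that the resulting power of $\tau$ in the second coordinate absorbs the leading $\tau$ introduced by $\Theta$. The cleanest presentation would be to run the three calculations in parallel, emphasising how the exponents $1$, $1-q$, and $1-q^2$ arise from the Frobenius degrees $1$, $q^2$, $q$ appearing in the second coordinates of $[S_k]$, $[C_k]$, $[T_k]$ respectively.
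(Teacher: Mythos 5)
Your proposal is correct and follows essentially the same route as the paper's proof: identify $M$ with $\MM_\tau$, then verify the action on each family by the reparametrisations $x'=\tau x$, $\tau^q x$, $\tau^{q^2}x$, which is exactly the computation $([\tau kx],[\tau x^{q^2}],0)\equiv([\tau^{1-q}k(\tau^q x)],[(\tau^q x)^{q^2}],0)$ performed in the paper. Your added check that multiplication by $\tau^{1-q}$ and $\tau^{1-q^2}$ permutes $\mathscr K$ is a small but worthwhile supplement that the paper leaves implicit.
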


\begin{proof} It is straightforward to show that 
 $\Theta$ fixes the planes of the regular $2$-spread $\S$ (so it also fixes the planes of the exterior splash $\ES$). 
 In fact
$\langle\Theta\rangle$
acts regularly on the set of points, and on the set of lines, of each spread element.
Note that $M$ is the matrix $M_\tau$ defined in Section~\ref{sec:eg:Mk}, and so $M[x]=[\tau x]$. 
Consider the action of $\Theta$ on a point of the cover plane $[C_k]\in\EC$ coordinatised in  Lemma~\ref{coord-covers-new}.
We have 
$([kx],[x^{q^2}],0)^\Theta=([\tau kx],[\tau x^{q^2}],0)\equiv([\tau^{1-q}\k(\tau^q x)],[(\tau^q x)^{q^2}],0)$ which lies in the cover plane $[C_{\tau^{1-q} \k}]$ of $\EC$. 
 Similarly a point $([\k x],[x^q],0)$ in the cover plane $[T_k]\in\ET$ maps under $\Theta$ to the point $([\tau^{1-q^2}\k(\tau^{q^2} x)],[(\tau^{q^2} x)^q],0)$ which lies in the cover plane $[T_{\tau^{1-q^2}\k}]$ of $\ET$. 
\end{proof}

\begin{theorem}\Label{def:tgt-cover}
Let $P$ be a point of an exterior \orsp\ $\pi$. In $\PG(6,q)$, the tangent plane $\TP$ at $P$ to $[\pi]$ 
meets $\Sigma_\infty$ in a line that lies in a  plane  of the tangent cover $\ET$ of $[\pi]$. Moreover, distinct points of $\pi$ correspond to distinct cover planes of $\ET$.
\end{theorem}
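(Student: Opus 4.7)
By Theorem~\ref{transsplashes}, we may assume $\pi=\Bpi$ as coordinatised in Section~\ref{mapsection-Bpi2}. Every point $P\in\Bpi$ has the form $P=\sigma((a,b,c))$ for some $(a,b,c)\in\pi_0=\PG(2,q)$, giving $P=(X,X^q,Z)$ in $\PG(2,q^3)$ with $X=-\tau a+b$ and $Z=\tau\tau^q a-(\tau+\tau^q)b+c$ in $\gfqc$. The plan is to extend the tangent computation of Theorem~\ref{tgtpleqn} to an arbitrary such $P$, identify the resulting line $\TP\cap\si$ as a line lying in the cover plane $[T_k]\in\ET$ with $k=Z^{2(q^2-1)}\in\mathscr{K}$, and then show $P\mapsto[T_k]$ is injective (hence bijective, since $|\Bpi|=q^2+q+1=|\ET|$).

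To compute the tangent at $P$ to a line of $\Bpi$ through $P$, pick any other point $P'=(X',X'^q,Z')$ on that line (automatically in $\Bpi$), and parametrise the \orsl\ by $P_s=P+sP'$ for $s\in\gfq\cup\{\infty\}$. Repeating the secant-and-limit computation of the proof of Theorem~\ref{tgtpleqn} yields that the tangent direction at $P$ to the corresponding twisted cubic is (up to a $\gfqc$-scalar) the point
\[
I_{P'}=\bigl([\,Z^{q+q^2-1}(X'Z-Z'X)\,],\;[\,Z^{q+q^2-1}(X'^qZ-Z'X^q)\,],\;0\bigr)\in\si.
\]
The key algebraic identity is $X'Z-Z'X=X'Z^{q^2}-Z'^{q^2}X$, which is a one-line consequence of $Z-Z^{q^2}=(\tau^{q^2}-\tau^q)X$ (an immediate expansion using $a,b,c\in\gfq$) together with the identical formula for $Z'$. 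Using the criterion that $([y_1],[y_2],0)$ with $y_2\neq 0$ lies in $[T_k]$ precisely when $k=y_1/y_2^{q^2}$, this identity yields $k=Z^{2(q^2-1)}$, independent of the choice of $P'$. Hence all $q+1$ tangent directions at $P$ (one per line through $P$) lie in the single plane $[T_k]$, and so does the line $\TP\cap\si$ that they span.

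For injectivity, I first claim that $\{1,\tau+\tau^q,\tau\tau^q\}$ is a $\gfq$-basis of $\gfqc$: if $\tau\tau^q=a+b(\tau+\tau^q)$ for some $a,b\in\gfq$, then applying the Frobenius and simplifying forces $(\tau^q-b)(\tau-\tau^{q^2})=0$, hence $\tau^q=b\in\gfq$, contradicting $\tau$ being primitive in $\gfqc$. Consequently $(a,b,c)\mapsto Z$ is a $\gfq$-linear bijection $\gfq^3\to\gfqc$. Moreover, using that $q^2+q+1$ is odd and $q^2+q+1-q(q+1)=1$ gives $\gcd(2(q+1),q^2+q+1)=1$, so $\gcd(2(q^2-1),q^3-1)=q-1$, and hence $Z\mapsto Z^{2(q^2-1)}$ has kernel exactly $\gfq^*$ and descends to a bijection $\gfqc^*/\gfq^*\to\mathscr{K}$ between sets of size $q^2+q+1$. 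Composing, distinct points of $\Bpi$ correspond to distinct cover planes in $\ET$.

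The main technical effort lies in the algebraic expansion of the secant-and-limit expression and in bookkeeping the $(Z+sZ')^{q^2+q}$-normalisations needed to convert between projective $\PG(2,q^3)$ and $\PG(6,q)$ coordinates. Using the second point $P'\in\Bpi$ directly (rather than an "affine direction" $(u,v,0)$ from $\pi_0$) makes the setup uniform across all points of $\Bpi$, avoiding any case split based on whether $(a,b,c)\in\pi_0\cap\li$.
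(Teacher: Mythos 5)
Your proposal is correct, but it follows a genuinely different route from the paper's proof. The paper handles the first claim by invoking Theorem~\ref{propthm} (the stabiliser of $\pi$ is transitive on its points) to reduce to the single point $P=(0,0,1)$, where the line $\TP\cap\si$ was already computed in the proof of Theorem~\ref{tgtpleqn} and visibly lies in $[T_1]$; the one-to-one correspondence is then deduced from the homography $\Theta$ of Lemma~\ref{theta-action}, which fixes the spread and is transitive on the planes of $\ET$. You instead run the secant-and-limit computation at an arbitrary point $P=(X,X^q,Z)$ of $\Bpi$ and extract the closed formula $k=Z^{2(q^2-1)}$ for the tangent cover plane attached to $P$; the independence of $k$ from the auxiliary point $P'$ rests on the identity $X'Z-Z'X=X'Z^{q^2}-Z'^{q^2}X$, which you correctly reduce to $Z-Z^{q^2}=(\tau^{q^2}-\tau^q)X$, and I have checked that your membership criterion for $[T_k]$ and the exponent bookkeeping (the normalising factor $Z^{q^2+q+1}\in\gfq$ is projectively invisible) are right. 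What your approach buys is an explicit bijection $P\mapsto Z^{2(q^2-1)}$ between the points of $\Bpi$ and the planes of $\ET$, and a self-contained injectivity argument via $\gcd(2(q^2-1),q^3-1)=q-1$; this is arguably more robust than the paper's one-line appeal to $\Theta$, since $\Theta$ does not stabilise $[\Bpi]$ and so does not literally permute the tangent planes of $[\Bpi]$ among themselves. The cost is a longer computation in place of the paper's reduction to one point. Two small points to tidy: (i) your basis argument should also record that $\tau+\tau^q\notin\gfq$ (equivalent to $\tau^{q^2}\ne\tau$), so that $1,\tau+\tau^q,\tau\tau^q$ are genuinely independent and not merely with $\tau\tau^q$ outside the span of the other two; (ii) you should note that $X'^qZ-Z'X^q\ne0$ whenever $P'\ne P$ (otherwise, since $y_1=y_2^{q^2}$, the point $I_{P'}$ would degenerate), which follows because both points are affine and non-proportional.
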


\begin{proof} By Theorems~\ref{transsplashes} and~\ref{propthm}, we can without loss of generality prove this result for the \orsp\ $\Bpi$ coordinatised in Section~\ref{mapsection-Bpi2} and the point $P=(0,0,1)\in\Bpi$. 
In $\PG(6,q)$, let $\TP$ be the tangent plane at $P$, the line 
$\ell=\TP\cap\si$ was calculated in the proof of Theorem~\ref{tgtpleqn} to be 
$\ell=\{a([1],[1],0)+b([\tau],[\tau^q],0)\st a,b\in\gfq\}$. The points of $\ell$ all lie in the plane $[T_1]=\{[x],[x^q],0)|x\in\gfqcs\}$, which by Theorem~\ref{coord-covers-new} is a plane of the tangent cover $\ET$ of $\Bpi$.
The collineation of Lemma~\ref{theta-action} is transitive on the cover planes of $\ET$, hence each cover plane contains a line of a distinct tangent plane. Hence there 
is a one-to-one correspondence between points of $\pi$ and cover planes of $\ET$. 
\end{proof}

\section{Transversal lines of covers}\Label{sec:coords-covers-new}

Recall that a regular $2$-spread in $\PG(5,q)$ has three (conjugate skew) transversals in $\PG(5,q^3)$ which meet each (extended) plane of $\S$. In this section we consider an exterior splash $\ES\subset\S$, and show in Lemma~\ref{unique-trans} that  the transversals of the $2$-spread $\S$ are the only lines of $\PG(5,q^3)$ that meet every extended plane of  $\ES$.  We then consider the two sets of cover planes $\ET$ and $\EC$. We show in Theorem~\ref{STC-coord} that  each can be extended to regular $2$-spread, and we calculate the coordinates of the corresponding transversal lines  in Theorem~\ref{gs-gt-gc}. We then show in Theorem~\ref{splash-carriers} that the nine transversals of $\ES$, $\EC$ and $\ET$ can be used to characterise the carriers of the exterior splash $\ES$. Finally, in Theorem~\ref{trans-for-all-covers}, we discuss the transversal lines in the situation when $\li$ is partitioned into exterior splashes with common carriers.

\subsection{The exterior splash and its covers have unique transversals}

If $\X$ is a set in $\PG(6,q)$ (such as a line, a plane, or a conic), then we denote its natural extension to $\PG(6,q^3)$ by $\X^*$.
Let $\S$ be the regular $2$-spread in $\si$ of the Bruck-Bose representation in $\PG(6,q)$. 
If we extend the planes of $\S$ to $\PG(6,q^3)$, yielding $\S^*$,  then there are exactly three transversal lines to $\S^*$, that is, three lines that meet every plane of $\S^*$. These three lines are conjugate and skew.
We now consider an exterior splash $\ES\subset\S$ and extend the planes of $\ES$ to $\PG(6,q^3)$, yielding $\ES^*$. We show that there are exactly three lines of $\PG(6,q^3)$ that meet every plane of $\ES^*$, namely the three transversals of $\S$.

\begin{lemma}\Label{unique-trans}
 Let $\S$ be a regular $2$-spread in $\PG(5,q)$, and let $\ES\subset\S$ be an exterior splash. 
 In the cubic extension $\PG(5,q^3)$, there are 
 exactly three transversals to $\ES$, namely the three transversals of  $\S$.  Hence  $\ES$ lies in a unique regular $2$-spread, namely $\S$.
\end{lemma}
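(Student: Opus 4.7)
The plan is to work in coordinates using Lemma~\ref{coord-covers-new} and the matrix algebra from Section~\ref{sec:eg:Mk}. By Theorem~\ref{transsplashes}, I may take $\ES = \{[S_\k] : \k \in \KK\}$ without loss of generality, so that each extended plane has the form $[S_\k]^* = \{(\MM_\k v, v) : v \in \gfqc^3\setminus\{0\}\}$ in $\PG(5,q^3)$. By equation~(\ref{JA-eqn}) (and its Frobenius conjugate), the three transversals $g, g^q, g^{q^2}$ of $\S$ from Lemma~\ref{S-coord-g} are precisely the lines obtained by fixing $v \in \{A, A^q, A^{q^2}\}$ in this parametrisation -- these three vectors being the $\gfqc$-eigenvectors of $\MM_\tau$, with eigenvalues $\tau, \tau^q, \tau^{q^2}$. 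The goal is to show that any line $h$ of $\PG(5,q^3)$ meeting every $[S_\k]^*$ coincides with one of these three.

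Since disjoint $\gfq$-planes remain disjoint over $\gfqc$ (they are complementary, and this is preserved under extension), $h$ meets each $[S_\k]^*$ in a single point $P_\k = (\MM_\k v_\k, v_\k)$. Fix $\k_0, \k_1 \in \KK$ distinct, set $v_i = v_{\k_i}$, and for a third $\k \in \KK$ write $P_\k = \alpha P_{\k_0} + \beta P_{\k_1}$ with $(\alpha,\beta)\neq 0$. Using the $\gfq$-linearity of $\k \mapsto \MM_\k$ and comparing the first and last triples of coordinates yields
\[
\alpha\,\MM_{\k-\k_0} v_0 + \beta\,\MM_{\k-\k_1} v_1 = 0,
\]
so the vectors $\MM_{\k-\k_0} v_0$ and $\MM_{\k-\k_1} v_1$ are parallel in $\gfqc^3$. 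Since $\MM_{\k-\k_0}$ is invertible, this is equivalent to $\MM_{\mu(\k)} v_0 \in \langle v_1 \rangle_{\gfqc}$, where $\mu(\k) = (\k-\k_0)/(\k-\k_1)$.

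The heart of the argument is a dimension count. Consider the $\gfq$-linear map
\[
\phi\colon \gfqc \to \gfqc^3/\langle v_1\rangle_{\gfqc}, \qquad \phi(\mu) = \MM_\mu v_0 \bmod v_1.
\]
The Möbius transformation $\k \mapsto \mu(\k)$ is injective, so as $\k$ ranges over $\KK\setminus\{\k_0,\k_1\}$ it produces $q^2+q-1$ distinct nonzero elements of $\ker\phi$; together with $\mu = 0$ this gives $q^2+q$ kernel elements in total. But $\ker\phi$ is a $\gfq$-subspace of the $3$-dimensional $\gfq$-space $\gfqc$, and a proper $\gfq$-subspace has at most $q^2$ elements, so $\ker\phi = \gfqc$. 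Specialising at $\mu = 1$ then gives $v_0 \parallel v_1$, and specialising at $\mu = \tau$ forces $v_0$ to be a $\gfqc$-eigenvector of $\MM_\tau$. By~(\ref{JA-eqn}) the eigenvectors are $A, A^q, A^{q^2}$ (up to scalar), so $h \in \{g, g^q, g^{q^2}\}$.

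The second assertion is then immediate: any regular $2$-spread $\S'$ containing $\ES$ has three conjugate transversals meeting every plane of $\S'$, hence every plane of $\ES$, so by the first part these three lines are $g, g^q, g^{q^2}$; since a regular $2$-spread is determined by its transversals (via the construction in Section~\ref{sect:intro}), $\S' = \S$. I expect the main technical obstacle to be the counting step -- specifically, verifying that the $q^2+q$ claimed kernel elements really are distinct (which relies on $|\KK| = q^2+q+1$ together with the injectivity of the Möbius transformation $\k \mapsto \mu(\k)$), and cleanly converting ``too many solutions'' into ``whole space'' via the comparison $q^2+q > q^2$.
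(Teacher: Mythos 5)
Your proof is correct, but it takes a genuinely different route from the paper's. The paper argues synthetically: a hypothetical fourth transversal $\ell$ together with $\gs,\gs^q,\gs^{q^2}$ gives four pairwise skew lines determining a $2$-regulus of $\PG(5,q^3)$ whose planes include all of $\ES^*$; since $\ES$ contains two $2$-reguli sharing a plane $[L]$ (coming from two \orsls\ of the splash through the point $L$, via \cite[Corollary 15]{lavr10} and Theorem~\ref{sublinesinBB}), their ruling lines through a common point of $[L]$ would be two concurrent ruling lines of that larger $2$-regulus, a contradiction. You instead work in coordinates and replace the subline machinery by a pigeonhole count: collinearity of the intersection points forces the $\gfq$-linear map $\mu\mapsto\MM_\mu v_0 \bmod \langle v_1\rangle$ to vanish on more than $q^2$ elements of the $3$-dimensional $\gfq$-space $\gfqc$, hence identically, and specialising at $\mu=1$ and $\mu=\tau$ pins $v_0=v_1$ down to an eigenvector of $\MM_\tau$, i.e.\ to $A$, $A^q$ or $A^{q^2}$, whence $h$ is one of $\gs,\gs^q,\gs^{q^2}$. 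The counting step you flag as the main risk is sound: $\k\mapsto(\k-\k_0)/(\k-\k_1)$ is injective and takes the value $0$ only at $\k_0$, so you genuinely obtain $q^2+q>q^2$ kernel elements, and a $\gfq$-subspace of that size must be all of $\gfqc$. What your route buys: it is self-contained given the coordinatisation already in the paper, it avoids the appeal to \cite{lavr10}, and in effect it only uses the cardinality $q^2+q+1>q^2+1$, so it actually proves the stronger statement that any set of more than $q^2+1$ planes of a regular $2$-spread admits no transversal beyond the three transversals of the spread. What the paper's route buys: it is shorter, coordinate-free, and exhibits the structural reason (the $2$-reguli lying inside an exterior splash) behind the result. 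The deduction of the final assertion --- that $\ES$ lies in a unique regular $2$-spread --- is the same in both arguments.
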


\begin{proof}
The three conjugate transversal lines of the regular $2$-spread $\S$, denoted $g_\ES,\gs^q,\gs^{q^2}$, are also transversals of $\ES$. 
Suppose there is a fourth transversal line $\ell$ of $\ES$. Then the four lines $\gs,\gs^q,\gs^{q^2},\ell$ are pairwise skew. Further, these four lines are ruling lines of  a unique $2$-regulus ${\mathscr R}$ of $\si^*\cong\PG(5,q^3)$, which contains the set of extended planes $\ES^*$. Now consider two planes $[L],[M]\in\ES$, the corresponding points $L,M$ of $\li$ in $\PG(2,q^3)$ lie in two \orsls\ contained in $\ES$ by \cite[Corollary 15]{lavr10}. Hence by Theorem~\ref{sublinesinBB}, $[L],[M]$ lie in two $2$-reguli $\R_1,\R_2$ which are contained in $\ES$. Let $P$ be a point in $[L]$, then there are unique  lines $m_1,m_2$ through $P$ that are ruling lines of $\R_1,\R_2$ respectively. Now $\R_1$, $\R_2$ lie in $\ES$, and so lie in ${\mathscr R}$, so the extended lines $m_i^*$, $i=1,2$,  are two ruling lines of ${\mathscr R}$ that meet in a point $P$, a contradiction. Hence the line $\ell$ cannot exist. That is, there are only three transversal lines to $\ES$, and these are necessarily the transversals of $\S$. 
\end{proof}

Analogous to Lemma~\ref{unique-trans} about an exterior splash $\ES$, we will show that the two covers of $\ES$ each have exactly three conjugate transversal lines in $\PG(6,q^3)$. 

\begin{theorem}\Label{STC-coord}
In $\PG(5,q)$, let $\ES$ be an exterior splash with covers $\ET$ and $\EC$. Then in the cubic extension $\PG(5,q^3)$,
\begin{enumerate}
\item  The cover $\ET$ has exactly three transversal lines:  $\gt,\gt^q,\gt^{q^2}\subseteq\PG(5,q^3)\setminus\PG(5,q)$, and so $\ET$ lies in a unique regular $2$-spread.
\item  The cover $\EC$ has exactly three transversal lines: $\gc,\gc^q,\gc^{q^2}\subseteq\PG(5,q^3)\setminus\PG(5,q)$, and so $\EC$ lies in a unique regular $2$-spread.
\end{enumerate}
\end{theorem}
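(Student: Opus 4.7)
The plan is to transport Lemma~\ref{unique-trans} from the exterior splash $\ES$ to its covers $\ET$ and $\EC$ by applying the homography $\beta$ introduced in equation~(\ref{def:beta}). The key observation is that writing $y\in\gfqc$ in the $\gfq$-basis $\{1,\tau,\tau^2\}$ and expanding $y^q$ in the same basis realises $[y]\mapsto[y^q]$ as a $\gfq$-linear map; hence $\beta\colon([x],[y])\mapsto([x],[y^q])$ is a homography of $\PG(5,q)$ whose matrix has entries in $\gfq$. It therefore extends uniquely to a homography of $\PG(5,q^3)$, still denoted $\beta$, and this extension commutes with Galois conjugation $X\mapsto X^q$ on $\PG(5,q^3)$. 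By Lemma~\ref{coord-covers-new} we have $\beta(\ES)=\ET$ and $\beta^2(\ES)=\EC$. Since $\gs\subseteq\PG(5,q^3)\setminus\PG(5,q)$ and $\beta$ stabilises $\PG(5,q)$ setwise, the images $\gt:=\beta(\gs)$ and $\gc:=\beta^2(\gs)$ also lie in $\PG(5,q^3)\setminus\PG(5,q)$.

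Because $\beta$ commutes with conjugation, $\gt^{q^i}=\beta(\gs^{q^i})$ for $i=1,2$, so $\gt,\gt^q,\gt^{q^2}$ are three mutually conjugate lines; they are pairwise skew since $\beta$ is a bijection on $\PG(5,q^3)$ and $\gs,\gs^q,\gs^{q^2}$ are pairwise skew. For each cover plane $[T_\k]=\beta([S_\k])\in\ET$, the intersection $\gt\cap[T_\k]=\beta(\gs\cap[S_\k])$ is non-empty because $\gs$ meets the spread element $[S_\k]$. Thus $\gt,\gt^q,\gt^{q^2}$ are three skew conjugate transversals of $\ET$, and the parallel argument applied to $\beta^2$ produces three skew conjugate transversals $\gc,\gc^q,\gc^{q^2}$ of $\EC$.

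For uniqueness, suppose $\ell\subseteq\PG(5,q^3)$ is any transversal of $\ET$. Then $\beta^{-1}(\ell)$ meets every plane of $\beta^{-1}(\ET)=\ES$, so $\beta^{-1}(\ell)$ is a transversal of $\ES$. By Lemma~\ref{unique-trans}, $\beta^{-1}(\ell)\in\{\gs,\gs^q,\gs^{q^2}\}$, whence $\ell\in\{\gt,\gt^q,\gt^{q^2}\}$. Hence $\ET$ has exactly three transversals in $\PG(5,q^3)$, and the identical argument with $\beta^{-2}$ handles $\EC$. Finally, the triple of conjugate skew lines $\gt,\gt^q,\gt^{q^2}$ generates a unique regular $2$-spread, namely $\beta(\S)$; any regular $2$-spread containing $\ET$ must have its three transversals among the (only three) transversals of $\ET$, so it must equal $\beta(\S)$. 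The analogous argument for $\EC$ gives the unique regular $2$-spread $\beta^2(\S)$. The only technical point that could cause trouble is verifying that $\beta$ is defined over $\gfq$ and commutes with conjugation, but this is immediate from the matrix description, so no serious obstacle arises.
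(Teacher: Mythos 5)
Your proposal is correct and takes essentially the same route as the paper: both construct $\gt=\beta(\gs)$ and $\gc=\beta^2(\gs)$ by transporting the transversals of $\S$ through the homography $\beta$ of Lemma~\ref{coord-covers-new} (after the reduction, via Theorem~\ref{transsplashes}, to the coordinatised splash, which you use implicitly by citing that lemma), and both identify the unique regular $2$-spread containing each cover as the image of $\S$ under $\beta$ or $\beta^2$. The one place you improve on the paper is the uniqueness of the three transversals: the paper merely asserts that the argument ``is similar to the proof of Lemma~\ref{unique-trans}'', whereas you pull a putative extra transversal back through $\beta^{-1}$ (legitimate, since $\beta$ is defined over $\gfq$ and so commutes with conjugation and preserves $\PG(5,q^3)\setminus\PG(5,q)$) and invoke Lemma~\ref{unique-trans} directly, which is cleaner and avoids repeating the $2$-regulus argument.
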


\begin{proof}
By Theorem~\ref{transsplashes}, we can without loss of generality prove this for the exterior splash of the \orsp\ $\Bpi$ coordinatised in Section~\ref{mapsection-Bpi2}. We use the coordinatisation in $\PG(5,q)$ of the exterior splash $\ES$ of $\Bpi$ and the two covers $\ET$, $\EC$ given in Lemma~\ref{coord-covers-new}. 
The homography $\beta$ of $\PG(5,q)$ defined in (\ref{def:beta}) can be regarded as a homography of $\PG(5,q^3)$. By the proof of Lemma~\ref{coord-covers-new},  $\beta$ maps the spread plane $[S_\k]$, $k\in\gfqc\cup\{\infty\}$ to a plane $[T_\k]$ of $\si$ (note that $[T_k]\in\ET$ if $k\in{\mathscr K}$). Thus $\beta$ maps the transversal line $g_\ES$ to a line $g_\ET$ that meets every extended plane $[T_\k]^*$, $\k\in\gfqc\cup\{\infty\}$. Hence $g_\ET$ 
meets each (extended) cover plane of $\ET=\{[T_k]\st k\in{\mathscr K}\}$, and so  $g_\ET$ 
 is a transversal line of $\ET$. Thus the conjugate lines $g_\ET^q,g_\ET^{q^2}$ are also transversals of $\ET$. The argument that these are the only transversals of $\ET$ 
is similar to the proof of Lemma~\ref{unique-trans}.
Finally, note that the set 
$\{[T_\k]\st \k\in\gfqc\cup\{\infty\}\}$ is a regular $2$-spread with transversals $g_\ET,g_\ET^q,g_\ET^{q^2}$, and so it is the unique regular $2$-spread containing $\ET$. As $\beta$ maps the  plane $[T_k]$ to the plane $[C_k]$, and maps the line $\gt$ to the line $\gc$, a similar result holds for the conic cover $\EC$.
\end{proof}

Later we will need the coordinates of  the point of intersection of the transversal lines with the corresponding cover planes, and we calculate these next.

\begin{theorem}\Label{gs-gt-gc} Let $\Bpi$ be the \orsp\ coordinatised in Section {\rm\ref{mapsection-Bpi2}} with exterior splash $\ES$ and covers $\EC,\ET$. Let $p_0=
t_1+t_2\tau-\tau^2=-\tau^q\tau^{q^2}$, $p_1=t_2-\tau=\tau^q+\tau^{q^2}$, $p_2=-1$, and $\eta= p_0+p_1\tau+p_2\tau^2$. Let $A_1=(p_0,p_1,p_2,0,0,0,0)$, $A_2=(0,0,0,p_0,p_1,p_2,0)$. Then  in $\PG(6,q^3)$, 
\begin{enumerate}
\item One transversal line of $\ES$ is $g_\ES=\langle A_1,A_2\rangle$, and $g_\ES\cap[S_\k]^*=\k A_1+A_2$.
\item One transversal line of $\ET$ is $g_\ET=\langle A_1,A_2^{q^2}\rangle$, and $g_\ET\cap[T_\k]^*=\k A_1+\eta^{1-q^2}A_2^{q^2}$.
\item One transversal line of $\EC$ is $g_\EC=\langle A_1,A_2^{q}\rangle$, and $g_\EC\cap[C_\k]^*=\k A_1+\eta^{1-q}A_2^{q}$.
\end{enumerate}
\end{theorem}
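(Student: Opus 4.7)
The plan is to reduce via Theorem~\ref{transsplashes} to the explicit \orsp\ $\Bpi$ of Section~\ref{mapsection-Bpi2} with coordinates from Lemma~\ref{coord-covers-new}, and to handle Parts 2 and 3 by transporting the Part 1 picture through the homography $\beta$ of (\ref{def:beta}) and its square $\beta^2$.

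Part 1 is essentially Lemma~\ref{S-coord-g}: $g_\ES = \langle A_1, A_2\rangle$ is already a transversal of $\S \supset \ES$. To pin down $g_\ES \cap [S_\k]^*$, I would extend the parametrisation $[S_\k] = \{(M_\k[x], [x], 0) : x\in\gfqc^*\}$ $\gfqc$-linearly to $[S_\k]^* = \{(M_\k v, v, 0) : v \in \gfqc^3 \setminus \{0\}\}$; choosing $v = A$ and invoking $M_\k A = \k A$ from (\ref{JA-eqn}) gives the intersection $(\k A, A, 0) = \k A_1 + A_2$.

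For Part 2 I would extend $\beta$ to $\PG(5, q^3)$ as the $\gfqc$-linear map $(u, v, 0) \mapsto (u, Bv, 0)$, where $B \in \gfq^{3\times 3}$ is the matrix of $[y] \mapsto [y^q]$; by the proof of Lemma~\ref{coord-covers-new} this sends $[S_\k]^* \mapsto [T_\k]^*$, so $\beta(g_\ES)$ is automatically a transversal of $\ET$. Since $\beta(A_1) = A_1$ and $\beta(A_2) = (0, BA, 0)$, the three assertions of Part 2 reduce to the single computation of $BA$. To carry it out I would introduce the Vandermonde $\gfqc$-isomorphism
\[
\Phi\colon \gfqc^3 \longrightarrow \gfqc^{\oplus 3}, \qquad (a_0, a_1, a_2) \longmapsto \bigl(a_0 + a_1\tau^{q^i} + a_2\tau^{2q^i}\bigr)_{i=0,1,2},
\]
which is invertible because $\tau,\tau^q,\tau^{q^2}$ are distinct. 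Verifying $\Phi([y]) = (y, y^q, y^{q^2})$ for $y \in \gfqc$ shows that $\Phi B \Phi^{-1}$ is the cyclic shift $\sigma\colon (x_0, x_1, x_2) \mapsto (x_1, x_2, x_0)$; and the Vieta factorisation $p_0 + p_1 x + p_2 x^2 = -(x - \tau^q)(x - \tau^{q^2})$ (derived from the minimal polynomial of $\tau$) yields $\Phi(A) = (\eta, 0, 0)$, from which Frobenius on coefficients gives $\Phi(A^q) = (0, \eta^q, 0)$ and $\Phi(A^{q^2}) = (0, 0, \eta^{q^2})$. Therefore $\Phi(BA) = \sigma(\eta, 0, 0) = (0, 0, \eta) = \eta^{1-q^2}\Phi(A^{q^2})$, so $BA = \eta^{1-q^2} A^{q^2}$. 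Consequently $\beta(A_2) = \eta^{1-q^2} A_2^{q^2}$, $g_\ET := \beta(g_\ES) = \langle A_1, A_2^{q^2}\rangle$, and $g_\ET \cap [T_\k]^* = \beta(\k A_1 + A_2) = \k A_1 + \eta^{1-q^2} A_2^{q^2}$.

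Part 3 runs in parallel with $\beta^2$ in place of $\beta$: it sends $[S_\k]^*$ to $[C_\k]^*$, and under $\Phi$ the matrix $B^2$ corresponds to the double shift $\sigma^2$, so $\Phi(B^2 A) = (0, \eta, 0) = \eta^{1-q}\Phi(A^q)$, giving $B^2 A = \eta^{1-q} A^q$ and the claimed formulas $g_\EC = \langle A_1, A_2^q\rangle$ and $g_\EC \cap [C_\k]^* = \k A_1 + \eta^{1-q} A_2^q$. The main obstacle is locating the diagonalising isomorphism $\Phi$ that simultaneously places $A, A^q, A^{q^2}$ on the coordinate axes and turns $B$ into a cyclic shift; once $\Phi$ is in hand, the whole theorem collapses to this one Vieta identity.
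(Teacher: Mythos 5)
Your proposal is correct and follows essentially the same route as the paper: Part 1 via the identity $M_\k A=\k A$ from (\ref{JA-eqn}), and Parts 2 and 3 by transporting the Part 1 intersection points through the homography $\beta$ of (\ref{def:beta}), with everything hinging on the relation $BA=\eta^{1-q^2}A^{q^2}$ (your $B$ is the paper's $N$, and this is exactly the paper's equation (\ref{eqn-NA})). The only divergence is how that relation is derived --- you diagonalise $B$ via the Vandermonde map $\Phi$ and the Vieta factorisation $p_0+p_1x+p_2x^2=-(x-\tau^q)(x-\tau^{q^2})$, whereas the paper computes it with the matrices $U_i$ and repeated use of (\ref{JA-eqn}); this is a tidy alternative for that one calculation but does not change the structure of the argument.
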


\begin{proof} 
We use the coordinatisation in $\PG(5,q)$ of the exterior splash $\ES$ of $\Bpi$ and the two covers $\ET$, $\EC$ given in Lemma~\ref{coord-covers-new}.
Lemma~\ref{S-coord-g} shows that $\gs=\langle A_1,A_2\rangle$ is a  transversal line for the regular 2-spread $\S$,  where
$A_1=(p_0,p_1,p_2,0,0,0)=(A,[0])$ and $A_2=(0,0,0,p_0,p_1,p_2)=([0],A)$. Hence $\gs=\langle A_1,A_2\rangle$ is a transversal line for the exterior splash $\ES$. 
 The planes of the regular $2$-spread $\S$ are $[S_\k]=\{([\k x],[x])\st x\in\gfqcs\}$, $k\in\gfqc\cup\{\infty\}$. We first show that the extended plane $[S_\k]^*$  meets the line $g_\ES$ in the point $\k A_1+A_2$.
Consider the point $P=p_0([\k ],[1])+p_1([\k \tau],[\tau])+p_2([\k \tau^2],[\tau^2])$ of $\PG(5,q^3)$, and note that $P\in[S_\k]^*$. Using the matrix $\MM_\k$ defined in Section~\ref{sec:eg:Mk}, we have
$
P=p_0(\MM_\k [1],[1])+p_1(\MM_\k [\tau],[\tau])+p_2(\MM_\k [\tau^2],[\tau^2])
=(\MM_\k A,A)
=(\k A,A)$ by (\ref{JA-eqn}). Hence $P=
\k A_1+A_2$ which lies in $g_\ES=\langle A_1,A_2\rangle$, that is,  $P$ is the intersection of $g_\ES$ and $[S_\k]^*$ proving part 1.

Consider the homography $\beta$ defined in (\ref{def:beta}), acting on $\PG(5,q^3)$.
The proof of Theorem~\ref{STC-coord} shows 
that $\beta$ maps $g_\ES$ to $g_\ET$, and maps $\gt$ to $\gc$.
Each element $y\in\gfqc'$ can be considered as a point  $[y]$ in $\PG(2,q)$. The collineation of $\PG(2,q)$ mapping the point  
$[y]$ to $[y^q]$ is a homography, and can be represented using a matrix $N$ with entries in $\gfq$.
We omit the transpose notation, and write 
 $N[y]=[y^q]$. Hence we can write the collineation $\beta$ as $\beta([x],[y])=([x],N[y])$. Clearly $\beta(A_1)=A_1$, we show that $\beta(A_2)=A_2^{q^2}$.  
Recall the point $A=(p_0,p_1,p_2)=p_0[1]+p_1[\tau]+p_2[\tau^2]$,
so $NA= p_0[1]+p_1[\tau^q]+p_2[\tau^{2q}]$. 
Using the matrix  $\MM_k$ from Section~\ref{sec:eg:Mk}, it is straightforward to write this as $NA=
\big(p_0^{q^2}I+p_1^{q^2}\MM_\tau+p_2^{q^2}M_{\tau^2}\big)^q\,[1]$.
Now $(p_0^{q^2}I+p_1^{q^2}\MM_\tau+p_2^{q^2}M_{\tau^2})[1]=A^{q^2}$, and $(p_0^{q^2}I+p_1^{q^2}\MM_\tau+p_2^{q^2}M_{\tau^2})A^{q^2}=\eta^{q^2}A^{q^2}$ by (\ref{JA-eqn}). So 
repeated use of  (\ref{JA-eqn}) yields $NA=\eta^{q^2(q-1)}A^{q^2}=\eta^{1-q^2}A^{q^2}$.
Further, as $N$ is over $\gfq$, we have
\begin{eqnarray}\label{eqn-NA}
NA=\eta^{1-q^2}A^{q^2},\quad NA^q=\eta^{q-1} A,\quad NA^{q^2}=\eta^{q^2-q} A^q.\end{eqnarray}
Hence $\beta( \k A_1+A_2)= \k A_1+\eta^{1-q^2}A_2^{q^2}$. As $\beta:\gs\mapsto\gt$, we have  $g_\ET\cap [T_\k]^*= \k A_1+\eta^{1-q^2}A_2^{q^2}$ and    $g_\ET=\langle A_1,A_2^{q^2}\rangle$, proving part 2.  Similarly, calculating $\beta( \k A_1+\eta^{1-q^2}A_2^{q^2})=\k A_1+\eta^{1-q^2+q^2-q}A_2^q=\k A_1+\eta^{1-q}A_2^q$, and using $\beta:\gt\mapsto\gc$, we get $g_\EC\cap [C_\k]^*=\k A_1+\eta^{1-q}A_2^q$ and $g_\EC=\langle A_1,A_2^{q}\rangle$. 
\end{proof}

We can use the transversals of the covers $\ET$ and $\EC$ to generalise the notion of $\S$-special conics and twisted cubics in $\PG(6,q)$ defined in Definition~\ref{def:S-special}. We define $\EC$-special here, $\ET$-special is similarly defined.

\begin{definition}\Label{C-special}
\begin{enumerate}
\item 
A {\em
   $\EC$-special conic}  is a non-degenerate conic $\C$ contained in a plane of $\EC$, such
that 
the extension of $\C$ to $\PG(6,q^3)$
 meets the transversals of
 $\EC$. 
 \item A {\em $\EC$-special twisted cubic} is a
twisted cubic  $\N$ in a $3$-space
 of $\PG(6,q)\backslash\si$ about a plane of $\EC$, such that the extension of $\N$ to $\PG(6,q^3)$ meets the transversals of $\EC$. 
\end{enumerate}
\end{definition}

\subsection{Characterising the carriers of an exterior splash in $\PG(6,q)$}

Let $\S$ be a regular $2$-spread of $\PG(5,q)$, and let $\ES$ be an exterior splash contained in $\S$, with covers $\EC$ and $\ET$. Then we can characterise the carriers of $\ES$ in terms of the nine transversals 
of $\ES$, $\EC$ and $\ET$.

\begin{theorem}\Label{splash-carriers}
Let $\S$ be a regular $2$-spread of $\PG(5,q)$, and let $\ES\subset\S$ be an exterior splash with covers $\EC$, $\ET$, whose corresponding triples of transversal lines are $\gs,\gs^q,\gs^{q^2}$,  $\gc,\gc^q,\gc^{q^2}$,  $\gt,\gt^q,\gt^{q^2}$ respectively.  Then the carriers of $\ES$ are the only two planes of $\S$ whose extension to $\PG(5,q^3)$ meets all nine transversal lines.
\end{theorem}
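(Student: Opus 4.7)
The plan is to reduce, via Theorem~\ref{transsplashes}, to the explicit coordinate setup of Section~\ref{mapsection-Bpi2}, and then verify the two directions using the intersection data of Theorem~\ref{gs-gt-gc} together with the eigenvector identity~(\ref{JA-eqn}).

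For the existence direction, the proof of Lemma~\ref{coord-covers-new} already observes that the carriers $[S_0]$ and $[S_\infty]$ lie in all three families, since $[S_0]=[T_0]=[C_0]$ and $[S_\infty]=[T_\infty]=[C_\infty]$. By Theorem~\ref{STC-coord}, the lines $\gt,\gt^q,\gt^{q^2}$ are the transversals of the unique regular $2$-spread $\{[T_k]:k\in\gfqc\cup\{\infty\}\}$ containing $\ET$, so each of these three lines meets $[T_0]$ and $[T_\infty]$; the same argument with $\EC$ in place of $\ET$ handles $\gc,\gc^q,\gc^{q^2}$; and $\gs,\gs^q,\gs^{q^2}$ are transversals of $\S$ itself, so they meet every plane of $\S$. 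Hence each carrier is met by all nine transversals.

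For uniqueness, I would take a non-carrier plane $[S_k]\in\S$, so $k\in\gfqcs$, and show that $[S_k]^*$ already misses the single transversal $\gt$. Parametrising $[S_k]^*=\{(M_k a,a):a\in\gfqc^3\}$ (as the $\gfqc$-span of the three basis points $([k\tau^i],[\tau^i])$, $i=0,1,2$), and writing $\gt=\langle A_1,A_2^{q^2}\rangle$ via Theorem~\ref{gs-gt-gc}, a common point would force $a=\mu A^{q^2}$ and $M_k a=\lambda A$ for some $\lambda,\mu\in\gfqc$. Substituting and invoking (\ref{JA-eqn}) yields $\lambda A=\mu k^{q^2}A^{q^2}$. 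The key observation is that $A$ and $A^{q^2}$ are linearly independent over $\gfqc$: both are eigenvectors of $M_\tau$, with eigenvalues $\tau$ and $\tau^{q^2}$ respectively, and these are distinct because $\tau$ is primitive in $\gfqc$. This forces $\lambda=\mu=0$, so no valid intersection point exists and $[S_k]^*\cap\gt=\emptyset$, ruling out $[S_k]$.

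The main obstacle will be precisely this linear-independence step, which is the one place where the primitivity of $\tau$ is essentially used; the remainder of the argument is a direct book-keeping exercise with the coordinates from Lemma~\ref{coord-covers-new} and Theorem~\ref{gs-gt-gc}.
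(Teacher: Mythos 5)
Your proposal is correct and follows essentially the same route as the paper: reduce to the coordinatised $\Bpi$, get the existence direction from $[S_0]=[T_0]=[C_0]$ and $[S_\infty]=[T_\infty]=[C_\infty]$, and rule out any other $[S_k]$ by intersecting its extension with $g_\ET$ and invoking $M_kA^{q^2}=k^{q^2}A^{q^2}$ from (\ref{JA-eqn}). The only difference is cosmetic: you spell out the linear independence of $A$ and $A^{q^2}$ (as eigenvectors of $M_\tau$ for distinct eigenvalues), which the paper leaves implicit in its ``no solutions'' step.
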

\begin{proof}  
By Theorem~\ref{transsplashes}, we can without loss of generality show this for the exterior splash $\ES$ of the exterior \orsp\ $\Bpi$ coordinatised in Section~\ref{mapsection-Bpi2}, with carriers $E_1=(1,0,0)$, $E_2=(0,1,0)$. In $\PG(6,q)$, the transversal lines $\gs,\gs^q,\gs^{q^2}$ each meet the carriers $[E_1]$, $[E_2]$ of $\ES$.
We use the notation for planes $[S_k]\in\S$, $[T_k]\in\ET$ and $[C_k]\in\EC$ from Lemma~\ref{coord-covers-new}.
By the proof of Theorem~\ref{STC-coord}, in the cubic extension $\PG(5,q^3)$, the transversal lines $\gt,\gt^q,\gt^{q^2}$ meet each plane $[T_k]$, $k\in\gfqc\cup\{\infty\}$; and the transversal lines $\gc,\gc^q,\gc^{q^2}$ meet each plane $[C_k] $, $k\in\gfqc\cup\{\infty\}$.
The carriers of $\ES$ satisfy $[E_2]=[S_0]=[T_0]=[C_0]$ and 
 $[E_1]=[S_\infty]=[T_\infty]=[C_\infty]$. Hence in the cubic extension $\PG(5,q^3)$, all nine transversal lines meet the carriers of $\ES$.

We now show that no other plane of the regular $2$-spread $\S$ meets all nine transversal lines. 
We use  the 
homography with matrix $\MM_k$ defined in Section~\ref{sec:eg:Mk}.
A plane of the regular $2$-spread $\S$ distinct from $[E_1]$, $[E_2]$ has form 
$[S_k]=\{([kx],[x],0)\st x\in\gfqcs\}$, for some $k\in\gfqcs$. This plane is spanned by the three points 
$S_{0,k} = ([k],[1],0)=(M_k(1,0,0),(1,0,0))$, 
$S_{1,k} = ([k\tau],[\tau],0)=(M_k(0,1,0),(0,1,0))$  and
$S_{2,k} = ([k\tau^2],[\tau^2],0)=(M_k(0,0,1),(0,0,1))$.
Hence the extension $[S_k]^*$ to $\PG(5,q^3)$ contains the points $S_{k,j}=c_0S_{0,j}+c_1S_{1,j}+c_2S_{2,j}$ where $c_i\in\gfqc$, not all zero. 
By Theorem~\ref{gs-gt-gc}, a general point $X$ on the transversal line $g_\ET$ has coordinates $X=rA_1+ A_2^{q^2}=(r p_0,r p_1,r p_2,p_0^{q^2},p_1^{q^2},p_2^{q^2})$, for some $r\in\gfqc\cup\{\infty\}$. 
Now $S_{j,k}=X$ if and only if $c_i=p_i^{q^2}$, $i=0,1,2$, and $M_k(c_0,c_1,c_2)=r(p_0,p_1,p_2)$. That is, $M_kA^{q^2}=rA$. However, by (\ref{JA-eqn}), $M_kA^{q^2}=k^{q^2}A^{q^2}$, so there are no solutions to $c_0,c_1,c_2$.
Hence the transversal line $g_\ET$ does not meet any further plane of the regular $2$-spread $\S$, and so $\gt^q,\gt^{q^2}$ do not meet any further plane of $\S$. A similar argument shows that the lines $\gc,\gc^q,\gc^{q^2}$ 
do not meet any further plane of the regular $2$-spread $\S$.
\end{proof}

\subsection{Transversal lines of exterior splashes with common carriers}
\Label{sec:cov-spl-car}

As exterior splashes are equivalent to covers of the circle geometry $CG(3,q)$,  there are $q-1$ disjoint exterior splashes on $\li$ with common carriers $\car_1,\car_2$. We show that in $\PG(6,q)$, the covers of these disjoint exterior splashes have common transversals.

\begin{theorem}\Label{trans-for-all-covers}
Let\, $\ES_0,\ldots,\ES_{q-1}$ be $q-1$ disjoint exterior splashes on $\li$ with common carriers $E_1,E_2$, and let exterior splash $\ES_j$ have covers $\EC_j$, $\ET_j$. 
Then the covers $\EC_0,\ldots,\EC_{ q-1}$ have  common transversal lines $\gc,\gc^q,\gc^{q^2}$, and 
 the covers $\ET_0,\ldots,\ET_{ q-1}$ have  common transversal lines $\gt,\gt^q,\gt^{q^2}$.
\end{theorem}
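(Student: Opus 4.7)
The plan is to exhibit, for each of the $q-1$ disjoint splashes, a homography of $\PG(2,q^3)$ that sends $\ES_0$ onto it and stabilises as a set each of the nine transversal lines in Theorem~\ref{gs-gt-gc}; the conclusion then follows from the fact that covers are intrinsic switching sets together with the uniqueness statement in Theorem~\ref{STC-coord}.

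By Theorem~\ref{transsplashes} I may assume that $\ES_0$ is the splash of the \orsp\ $\Bpi$ coordinatised in Section~\ref{mapsection-Bpi2}, so $E_1=(1,0,0)$, $E_2=(0,1,0)$ and $\ES_0=\{(k,1,0)\st k\in H\}$ where $H=\{k\in\gfqcs\st k^{q^2+q+1}=1\}$ is a subgroup of index $q-1$. For each $\lambda\in\gfqcs$ let $\mu_\lambda$ denote the homography of $\PG(2,q^3)$ with matrix $\mathrm{diag}(\lambda,1,1)$; then $\mu_\lambda$ fixes $\li,E_1,E_2$ and maps $\ES_0$ to the exterior splash $\mu_\lambda(\ES_0)$, which viewed inside $\li\setminus\{E_1,E_2\}\cong\gfqcs$ is the coset $\lambda H$. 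A direct analysis of the stabiliser of $\{E_1,E_2\}$ in $G_\li$ shows that its action on $\gfqcs$ is by maps of the form $k\mapsto\lambda k$ or $k\mapsto\lambda/k$, and hence preserves the set of $H$-cosets. Combined with the transitivity in Theorem~\ref{transsplashes}, this forces every exterior splash with carriers $\{E_1,E_2\}$ to be a coset of $H$. Since there are exactly $q-1$ such cosets and they partition $\gfqcs$, up to reordering the given family satisfies $\ES_j=\mu_{\lambda_j}(\ES_0)$ for some $\lambda_j\in\gfqcs$.

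The key computation is that each $\mu_{\lambda_j}$, extended to $\PG(6,q^3)$, fixes the nine transversal lines $g_\ES^{q^i}, g_\EC^{q^i}, g_\ET^{q^i}$ ($i=0,1,2$) from Theorem~\ref{gs-gt-gc}. In Bruck--Bose coordinates $\mu_{\lambda_j}$ acts as $([x],[y],z)\mapsto(M_{\lambda_j}[x],[y],z)$ and is the identity on the last three coordinates. Applying the eigenvalue identity $M_kA=kA$ of~(\ref{JA-eqn}) together with its Frobenius conjugates $M_kA^{q^i}=k^{q^i}A^{q^i}$ (which hold because $M_k$ has entries in $\gfq$), I obtain $\mu_{\lambda_j}(A_1^{q^i})=\lambda_j^{q^i}A_1^{q^i}$ and $\mu_{\lambda_j}(A_2^{q^i})=A_2^{q^i}$. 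Hence each line $\langle A_1^{q^a},A_2^{q^b}\rangle$ is preserved projectively, and every transversal line in Theorem~\ref{gs-gt-gc} is of this shape.

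To finish: since the two covers of an exterior splash are intrinsic combinatorial (switching-set) objects, $\mu_{\lambda_j}$ sends $\{\ET_0,\EC_0\}$ onto the pair of covers of $\ES_j$, with a consistent tangent-versus-conic labelling obtained by attaching the \orsp\ $\mu_{\lambda_j}(\Bpi)$ to $\ES_j$. Combining the fixation of transversal lines by $\mu_{\lambda_j}$ with the uniqueness of the conjugate triples in Theorem~\ref{STC-coord} yields that the transversals of $\ET_j$ are $\{g_\ET,g_\ET^q,g_\ET^{q^2}\}$ and those of $\EC_j$ are $\{g_\EC,g_\EC^q,g_\EC^{q^2}\}$ for every $j$. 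The main obstacle I expect is the identification step---justifying cleanly that the given $q-1$ disjoint splashes are precisely the $H$-cosets---since the classification of splashes with a fixed pair of carriers is not isolated as a lemma in the paper; once that is in hand, the transversal calculation is a direct application of~(\ref{JA-eqn}).
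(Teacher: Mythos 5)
Your proposal is correct and follows essentially the same route as the paper: the paper realises the $q-1$ splashes as the cosets $\tau^j\KK$, uses the homography $\theta_\tau$ (your $\mu_\lambda$ with $\lambda=\tau$) to permute the splashes and their covers while fixing the carriers, and applies (\ref{JA-eqn}) to see that this homography fixes the transversal lines $\langle A_1^{q^a},A_2^{q^b}\rangle$. The only packaging differences are that the paper writes out the covers $\ET_j,\EC_j$ in coordinates and verifies directly that $\theta_\tau$ maps covers to covers, whereas you invoke the intrinsicness of switching sets and the uniqueness in Theorem~\ref{STC-coord}, and the paper obtains the coset classification of splashes with common carriers from the circle-geometry equivalence rather than your stabiliser argument.
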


\begin{proof}
By Theorem~\ref{transsplashes}, we can without loss of generality prove this for the \orsp\ $\Bpi$ coordinatised in Section~\ref{mapsection-Bpi2}. Let $\KK=\{ k \in \gfqcs\st k^{q^2+q+1}=1\}
=\{\k=\tau^{i(q-1)}\st 0\leq i<q^2+q+1\}$. Recall that $\Bpi$ has carriers $E_1=(1,0,0)$, $E_2=(0,1,0)$, and exterior splash $\ES_0=\{S_{k,0}=(k,1,0)\st k\in\KK\}$.
Let  $\KK_j=\tau^j\KK$,  for $j=0\ldots, q-2$, be the $q-1$ cosets of $\KK$ in $\gfqcs$.  
Let $\ES_j=\{S_{k,j}=(k,1,0)\st k\in\KK_j\}$, $0\le j\le q-2$. 
Consider the homography 
$\xi$ acting on $\li$ that maps the point $(x,y,0)$ to $(\tau x,y,0)$. Then $\xi$ fixes $E_1,E_2$, maps
$\ES_j$ to $\ES_{j+1}$ ($0\le j\le q-3)$, and maps $\ES_{q-2}$ to $\ES_0$. Hence $\ES_0,\ldots,\ES_{q-1}$ are the $q-1$ disjoint exterior splashes on $\li$ with carriers $(1,0,0)$ and $(0,1,0)$.

In $\si\cong\PG(5,q)$, we have  planes  
$[S_{k,j}]=\{([kx],[x])\st x\in\gfqcs\}\in\ES$, and define planes  
$[T_{k,j}]=\{([k x],[x^q])\st x\in\gfqcs\}$, and 
$[C_{k,j}]=\{([k x],[x^{q^2}])\st x\in\gfqcs\}$, for $k\in\KK_j$ .
So $\ES_j=\{[S_{k,j}],k\in\KK_j\}$, and define
$\ET_j=\{[T_{k,j}],k\in\KK_j\}$ and $\EC_j=\{[C_{k,j}],k\in\KK_j\}$. Note that $\ET_0$, $\EC_0$ are the covers of the exterior splash $\ES_0$ of $\Bpi$. 
Now consider the map $\theta_\tau$ of $\PG(5,q)$ acting on $\si$ defined in Section~\ref{sec:eg:Mk}, it maps $\ES_j$ to $\ES_{j+1}$; $\ET_j$ to $\ET_{j+1}$; and  $\EC_j$ to $\EC_{j+1}$. 
Hence $\ET_j$ and $\EC_j$ are covers for $\ES_j$.
By 
Theorem~\ref{gs-gt-gc}, the transversal line of $\ET_0$ is $\gt=\langle A_1,A_2^{q^2}\rangle$. 
Using (\ref{JA-eqn}), we see that 
the homography $\theta_\tau$ fixes $\gt$, and so $\gt$ is a transversal for all $\ET_j$. So $\gt,\gt^q,\gt^{q^2}$ are transversal lines of $\ET_j$ for each $j=0,\ldots,q-2$. Similarly,  
$\gc,\gc^q,\gc^{q^2}$ are transversal lines of $\EC_j$ for each $j=0,\ldots,q-2$.
\end{proof}


\begin{remark} {\rm We can interpret this result using the terminology of \cite{culbert}. 
We can partition the planes of a regular $2$-spread into $q-1$ disjoint hyper-reguli with common carriers. Each hyper-reguli has two replacement hyper-reguli, which correspond to our conic and tangent covers. If we replace all $q-1$ hyper-reguli of $\S$ with hyper-reguli of the {\em same type} (that is, all belonging to $\EC$, or all belonging to $\ET$), then the resulting $2$-spread has transversals either $\gc,\gc^q,\gc^{q^2}$ or $\gt,\gt^q,\gt^{q^2}$, and so 
is regular. Hence the resulting Andr\'e plane is Desarguesian. If we replace all the hyper-reguli of $\S$ with a combination of hyper-reguli from each type, then the resulting $2$-spread is not regular, and so the resulting Andr\'e plane is non-Desarguesian.}
\end{remark}

\section{A characterisation of the sublines of an exterior splash}\Label{sec:orsl-BB}

Let $\pi$ be an exterior \orsp\ of $\PG(2,q^3)$ with exterior splash $\ES$ on $\li$. The 
order-$q$-sublines of $\ES$ are studied in \cite{lavr10}, and  are characterised with respect to geometric objects of $\pi$ in \cite{BJ-ext1}. 
In this section we characterise the \orsls\ of $\ES$ with respect to the covers of $\ES$ and their transversal lines.

There are $2(q^2+q+1)$ \orsls\ in an exterior splash which lie in two families of size $q^2+q+1$.
By \cite[Theorem 5.2]{BJ-ext1}, the two families are related to the associated  exterior \orsp\ $\pi$ as follows. 
 If $A$ is a point of $\pi$, then the pencil of $q+1$ lines of $\pi$ through $A$ meets $\li$ in an \orsl\ of $\ES$, called a {\em $\pi$-\psline} of $\ES$. 
 If $\Gamma$ is a $(\pi,\li)$-special-dual conic of $\pi$, then the lines of $\Gamma$ meet $\li$ in an \orsl\ of $\ES$, called a {\em $\pi$-\dcsline} of $\ES$. 
Note that we need to define these two classes with respect to an associated \orsp\ $\pi$, since in \cite[Theorem 5.3]{BJ-ext1}, we show that the two families of \orsls\ of $\ES$ can act as either class by considering different associated \orsps.

We now consider the interaction in $\PG(6,q)$ of these two families of \orsls\ with the two covers of $\ES$. We show in Theorem~\ref{psline-special} that each family meets planes from one cover in lines, and planes from  the other cover in conics. Further, Theorem~\ref{psline-converse} shows that  the converse is true, and so we have a characterisation of the \orsls\ of $\ES$. Moreover, Theorem~\ref{psline-conics-special} shows that the conics concerned in each case are special.

Suppose $\R$ is a $2$-regulus in $\PG(5,q)$, and consider a plane $\alpha$ that meets $\R$ in a set of $q+1$ points. Then an easy counting argument shows that these points form either a line or a conic in $\alpha$. We abbreviate this to `$\R$ meets $\alpha$  in a line or a conic'. 

\begin{theorem}\Label{psline-special}
Let $\pi$ be an exterior \orsp\ with exterior splash $\ES$, conic cover $\EC$, and  tangent cover $\ET$. 
\begin{enumerate}
\item A $\pi$-\psline\ of\, $\ES$ corresponds  in $\PG(6,q)$ to a $2$-regulus that  meets 
 each plane of   $\ET$ in a  distinct line, and meets each plane of $\EC$ in a conic.
\item A $\pi$-\dcsline\ of\, $\ES$ corresponds in $\PG(6,q)$ to a $2$-regulus that meets 
each plane of $\ET$ in a conic, and
meets each plane of  $\EC$ in a  distinct line. 
\end{enumerate}
\end{theorem}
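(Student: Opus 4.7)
By Theorems~\ref{transsplashes} and~\ref{propthm} we may work with the \orsp\ $\Bpi$ of Section~\ref{mapsection-Bpi2} and, for part~(1), with the pencil-subline through the point $P=(0,0,1)\in\Bpi$; coordinates for $\ES,\ET,\EC$ are those of Lemma~\ref{coord-covers-new}. From the computation in the proof of Theorem~\ref{tgtpleqn}, the lines of $\Bpi$ through $P$ meet $\li$ in the points $S_{k_m}=(k_m,1,0)$ with $k_m=(m+\tau)/(m+\tau^q)$ for $m\in\gfq\cup\{\infty\}$, so the associated $2$-regulus is $\R=\{[S_{k_m}]\st m\in\gfq\cup\{\infty\}\}\subset\ES$.

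The central calculation is $\R\cap[T_1]$. A point of $[S_{k_m}]\cap[T_1]$ corresponds to $y\in\gfqcs$ with $y^{q-1}=1/k_m=(m+\tau^q)/(m+\tau)$, for which $y=m+\tau$ is a solution, yielding the intersection point $([m+\tau],[m+\tau^q],0)=m([1],[1],0)+([\tau],[\tau^q],0)$. As $m$ ranges over $\gfq\cup\{\infty\}$, these $q+1$ points trace the line $\ell=\langle([1],[1],0),([\tau],[\tau^q],0)\rangle$ lying in $[T_1]$ (the tangent line computed in the proof of Theorem~\ref{tgtpleqn}). Now invoke the homography $\Theta$ of Lemma~\ref{theta-action}: it fixes each plane of $\S$, hence $\Theta(\R)=\R$, while acting on $\ET$ by $[T_k]\mapsto[T_{\tau^{1-q^2}k}]$. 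Since $\gcd(q+1,q^2+q+1)=1$, the element $\tau^{1-q^2}$ generates $\mathscr K$ multiplicatively, so this action is transitive on $\ET$. Therefore $\R\cap[T_k]=\Theta^j(\ell)$ is a line for every $[T_k]\in\ET$, and these $q^2+q+1$ distinct lines exhaust the ruling lines of $\R$.

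For the intersection with $\EC$: since distinct covers meet in unique points, each plane $[C_k]\in\EC$ meets each plane of $\R$ in exactly one point, so $\R\cap[C_k]$ has $q+1$ points. If they were collinear, their common line would be a ruling line of $\R$, which by the previous paragraph already lies in some plane $[T_{k'}]\in\ET$; but then $[T_{k'}]\cap[C_k]$ would contain this entire line, contradicting that distinct covers share only a single point. Hence the $q+1$ points are non-collinear, and since they arise (for $[C_1]$, then propagated by $\Theta$) from a degree-$2$ rational parameterization such as $m\mapsto([(m+\tau)(m+\tau^{q^2})],[(m+\tau^q)(m+\tau^{q^2})])$ in $[C_1]$, they form a non-degenerate conic. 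This proves part~(1).

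Part~(2) is handled by the same template. Either compute directly for a representative $(\Bpi,\li)$-special-dual-conic (showing $\R'\cap[C_1]$ is a line, then propagating by $\Theta$, which is also transitive on $\EC$ since $\tau^{1-q}$ generates $\mathscr K$, and finally applying the ruling-line argument to obtain conic intersections with every plane of $\ET$); or, more cleanly, invoke \cite[Theorem~5.3]{BJ-ext1} to choose another exterior \orsp\ $\pi'$ with the same splash $\ES$ whose tangent cover is $\EC$, so that its pencil-sublines coincide with the dual-conic-sublines of $\Bpi$, and apply part~(1) to $\pi'$. The main obstacle is the first approach's initial coordinate computation for a dual-conic-subline, which lacks the clean one-parameter description of the pencil case; the second approach sidesteps this and keeps the argument uniform.
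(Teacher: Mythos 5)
Part (1) of your argument is correct and is essentially the paper's: you compute the ruling line $\R\cap[T_1]=\langle([1],[1],0),([\tau],[\tau^q],0)\rangle$ directly, where the paper obtains the same line as $\TP\cap\si$ via Theorems~\ref{tgtpleqn} and~\ref{def:tgt-cover}; the propagation by $\Theta$, the exhaustion of the ruling lines of $\R$, and the line-versus-conic dichotomy for $\R\cap[C_k]$ all match the paper's reasoning (your explicit degree-two parameterization of $\R\cap[C_1]$ is a bonus that the paper only exploits later, in Theorem~\ref{psline-conics-special}).

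Part (2), however, has a genuine gap, and it sits exactly in the phrase ``whose tangent cover is $\EC$.'' Theorem~5.3 of \cite{BJ-ext1} supplies an \orsp\ $\pi'$ with splash $\ES$ whose pencil-sublines are the $\Bpi$-\dcsline s, but it says nothing about which of the two intrinsic covers of $\ES$ plays the role of tangent cover for $\pi'$; if that cover were $\ET$ rather than $\EC$, applying part (1) to $\pi'$ would yield the opposite of the claimed conclusion. The missing step is a counting argument, and it is precisely how the paper finishes: a plane $[T_k]\in\ET$ contains exactly $q^2+q+1$ lines; by part (1) each of the $q^2+q+1$ $\Bpi$-\psline\ $2$-reguli contributes one ruling line to $[T_k]$, and these are pairwise distinct because a line of $\si$ meeting $q+1$ spread planes determines the $2$-regulus it rules. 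Hence \emph{every} line of \emph{every} plane of $\ET$ is already a ruling line of some pencil $2$-regulus, so a dual-conic $2$-regulus $[d']$ (distinct from all of these by the bijection in Theorem~\ref{sublinesinBB}) cannot meet any plane of $\ET$ in a line; it therefore meets each plane of $\ET$ in a conic, and then your own cover-intersection argument forces the planes of $\EC$ to meet $[d']$ in lines. Your first suggested route (direct coordinates for a \dcsline) is also viable --- it is carried out in the proof of Theorem~\ref{psline-conics-special} via the line $u_1=\beta(t_1)\subset[C_1]$ --- but for the present statement the counting argument is both necessary and sufficient, and neither of your two sketches currently supplies it.
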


\begin{proof} 
Let $P$ be a point in the exterior \orsp\ $\pi$, and let $d$ be the corresponding $\pi$-\psline\ of $\ES$. By Theorem~\ref{sublinesinBB}, in $\PG(6,q)$, $[d]$ is a $2$-regulus contained in $\ES$. 
Consider the tangent plane $\TP$ to $[\pi]$ at $P$. By Theorem~\ref{tgtpleqn}, the lines of $\TP$ through $P$  meet $\si$ in points that lie in distinct planes of the $2$-regulus $[d]$. Hence $\TP\cap\si$ is a ruling line of the $2$-regulus $[d]$. By Theorem~\ref{def:tgt-cover}, this ruling line $\TP\cap\si$ lies in a tangent cover plane. The homography $\Theta$ of Lemma~\ref{theta-action} fixes the planes of $[b]$ and is transitive on the cover planes of $\ET$. Hence each ruling line of $[b]$ meets a unique cover plane of $\ET$.

A straightforward geometric argument shows that planes of $\ET,\EC$ meet a $2$-regulus of $\ES$ in a line or a conic. Hence a conic cover plane meets the $2$-regulus $[d]$ in a conic. 
As there are $q^2+q+1$ $\pi$-\psline s of $\ES$, every line in a plane of $\ET$ is a ruling line for some $2$-regulus corresponding to a $\pi$-\psline. Hence 
if $[d']$ is a $2$-regulus of $\ES$ corresponding to a $\pi$-\dcsline, then planes of $\ET$ meet $[d']$ in conics, and so planes of $\EC$ meet $[d']$ in ruling lines of $[d']$. Moreover, applying the homography of Lemma~\ref{theta-action} shows that each ruling line of $[d']$ lies in a unique conic cover plane.
\end{proof}

By Theorem~\ref{sublinesinBB}, there is a one to one correspondence between the \orsls\ of $\ES$ in $\PG(2,q^3)$, and the $2$-reguli contained in $\ES$ in $\PG(6,q)$. Hence the converse of Theorem~\ref{psline-special} is also true, and so we have a characterisation of \orsls\ of $\ES$ relating to the cover planes of the associated \orsp.

\begin{theorem}\Label{psline-converse}
Let $\pi$ be an exterior \orsp\ with exterior splash $\ES$, conic cover $\EC$, and  tangent cover $\ET$. 
\begin{enumerate}
\item A $\pi$-\psline\ of\, $\ES$ corresponds   to a $2$-regulus that  meets 
 each plane of   $\ET$ in a  distinct line. Conversely, a $2$-regulus contained in $\ES$ that meets some plane of $\ET$ in a line corresponds to a $\pi$-\psline\ of\, $\ES$.
 \item A $\pi$-\psline\ of\, $\ES$ corresponds   to a $2$-regulus that  meets 
 each plane of    $\EC$ in a conic. Conversely, a $2$-regulus contained in $\ES$ that meets some plane of  $\EC$ in a conic corresponds to a $\pi$-\psline\ of\, $\ES$.
 \item A $\pi$-\dcsline\ of\, $\ES$ corresponds  to a $2$-regulus that meets 
each plane of $\ET$ in a conic.
Conversely,  a $2$-regulus contained in $\ES$ that meets some plane of  $\ET$ in a conic corresponds to a $\pi$-\dcsline\ of\, $\ES$.
 \item A $\pi$-\dcsline\ of\, $\ES$ corresponds  to a $2$-regulus that meets 
each plane of $\EC$ in a  distinct line. Conversely, a $2$-regulus contained in $\ES$ that meets some plane of  $\EC$ in a   line corresponds to a $\pi$-\dcsline\ of\, $\ES$.
\end{enumerate}
\end{theorem}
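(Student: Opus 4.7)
The forward directions in all four parts are already established in Theorem~\ref{psline-special}, so only the converse statements require new argument. The plan is to set up a strict dichotomy between the two families of \orsls\ of $\ES$, and then observe that Theorem~\ref{psline-special} separates them by the intersection pattern with even a single cover plane.

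The first step is to invoke Theorem~\ref{sublinesinBB}\,(\ref{subline-secant-linfty}) to identify $2$-reguli contained in $\ES$ with \orsls\ of $\ES$, and then to recall from \cite[Theorem 5.2]{BJ-ext1} (as quoted just before the theorem) that every such \orsl\ is either a $\pi$-\psline\ or a $\pi$-\dcsline. The second step is a point-counting observation: any plane $\alpha$ of $\ET$ or of $\EC$ meets a $2$-regulus $[d]\subset\ES$ in exactly $q+1$ points. This follows from the switching set property recorded in Lemma~\ref{coord-covers-new}, namely that planes from different families among $\ES,\ET,\EC$ meet in a unique point, together with the fact that a $2$-regulus consists of $q+1$ planes of $\ES$. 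The counting argument noted just before Theorem~\ref{psline-special} then forces these $q+1$ points to form either a line or a non-degenerate conic of~$\alpha$.

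With these two observations in place, each converse becomes a short contradiction. For part~1, suppose $[d]\subset\ES$ is a $2$-regulus meeting some plane of $\ET$ in a line. If the corresponding \orsl\ were a $\pi$-\dcsline, then Theorem~\ref{psline-special}\,(2) would force $[d]$ to meet \emph{every} plane of $\ET$ in a conic, contradicting the hypothesis; hence the \orsl\ must be a $\pi$-\psline, and the forward direction of Theorem~\ref{psline-special}\,(1) then upgrades ``meets some plane of $\ET$ in a line'' to ``meets every plane of $\ET$ in a distinct line''. Parts~2, 3 and~4 follow by the same dichotomy, swapping the roles of line and conic and of $\ET$ and $\EC$ as appropriate.

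I do not expect a serious obstacle here: the whole argument reduces to the dichotomy between $\pi$-\psline s and $\pi$-\dcsline s, together with the fact that their intersection patterns on the cover planes are strictly disjoint (line versus conic). The only mildly delicate point is verifying the $q+1$ point count in the second step, which rests squarely on the switching set structure of $\ES,\ET,\EC$; once that count is in hand, all four converse statements are immediate.
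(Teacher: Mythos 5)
Your proposal is correct and follows essentially the same route as the paper, which dispatches the converses in the paragraph preceding the theorem by combining the bijection of Theorem~\ref{sublinesinBB} between \orsls\ of $\ES$ and $2$-reguli in $\ES$ with the dichotomy of intersection patterns established in Theorem~\ref{psline-special}. Your additional verification that a cover plane meets a $2$-regulus of $\ES$ in exactly $q+1$ points (hence a line or a conic) is a point the paper records just before Theorem~\ref{psline-special}, so nothing new is needed.
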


In fact, we can give a stronger characterisation of the \orsls\ of $\ES$, namely that the conics of Theorem~\ref{psline-special} are special with respect to the associated cover. In order to prove that the conics are special,  we need to introduce coordinates, and the proof  is calculation intensive.

\begin{theorem}\Label{psline-conics-special}
Let $\pi$ be an exterior \orsp\ with exterior splash $\ES$, conic cover $\EC$, and  tangent cover $\ET$. 
\begin{enumerate}
\item A $2$-regulus of $\ES$ corresponding to a $\pi$-\psline\ of $\ES$ meets each plane of $\EC$ in a $\EC$-special conic.
\item A $2$-regulus of $\ES$ corresponding to a $\pi$-\dcsline\ of $\ES$ meets each plane of $\ET$ in a $\ET$-special conic.
\end{enumerate}
\end{theorem}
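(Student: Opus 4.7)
The plan is to reduce, via the transitivity results of Theorems~\ref{transsplashes} and~\ref{propthm} together with the homography $\Theta$ of Lemma~\ref{theta-action}, to a single coordinate check in one extended cover plane, and then execute that check by parametrising the extension of the $2$-regulus as a Segre threefold in the eigenbasis $\{A,A^q,A^{q^2}\}$ of $M_\tau$.

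For Part~1, I would take $\pi=\Bpi$ and let $d$ be the $\Bpi$-\psline\ at $P=(0,0,1)$; the associated $2$-regulus $[d]$ consists of the spread planes $[S_{k_m}]$ with $k_m=(\tau+m)/(\tau^q+m)$ for $m\in\gfq\cup\{\infty\}$, extracted from the proof of Theorem~\ref{tgtpleqn}. Next I would use $\Theta$ to reduce to a single cover plane: $\Theta$ fixes each plane of $\S$ so fixes $[d]$; using (\ref{JA-eqn}) one checks $\Theta(A_1)=\tau A_1$ and $\Theta(A_2^q)=\tau^q A_2^q$, so $\Theta$ also fixes $g_\EC=\langle A_1,A_2^q\rangle$ setwise; and $\Theta$ is transitive on the planes of $\EC$ by Lemma~\ref{theta-action}. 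It therefore suffices to show that the extension $C^*$ of the conic $[d]\cap[C_1]$ in $[C_1]^*$ contains the point $g_\EC\cap[C_1]^*=A_1+\eta^{1-q}A_2^q$ of Theorem~\ref{gs-gt-gc}.

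To carry out that check, parametrise $[d]^*$ as a Segre threefold in $\PG(5,q^3)$. Each plane $[I_m]=[S_{k_m}]$ is parametrised by $[y]\mapsto((M_\tau+mI)[y],(M_{\tau^q}+mI)[y],0)$, so $[d]^*$ is the image of $(m,W)\in(\gfqc\cup\{\infty\})\times(\gfqc^3\setminus\{0\})$ under $((M_\tau+mI)W,(M_{\tau^q}+mI)W,0)$, and $C^*\subset[d]^*$. Expanding $W=aA+bA^q+cA^{q^2}$ in the eigenbasis of $M_\tau$ and using $\tau^{q^3}=\tau$ together with (\ref{JA-eqn}) (and its Frobenius conjugates) yields
\[
(M_\tau+mI)W=a(\tau+m)A+b(\tau^q+m)A^q+c(\tau^{q^2}+m)A^{q^2},
\]
\[
(M_{\tau^q}+mI)W=a(\tau^q+m)A+b(\tau^{q^2}+m)A^q+c(\tau+m)A^{q^2}.
\]
Requiring the first half to be a multiple of $A$ and the second to be a multiple of $A^q$ forces $m=-\tau^q$ and $c=0$; the choices $a=1/(\tau-\tau^q)$ and $b=\eta^{1-q}/(\tau^{q^2}-\tau^q)$ then produce exactly the target $(A,\eta^{1-q}A^q,0)$, showing $A_1+\eta^{1-q}A_2^q\in[d]^*\cap[C_1]^*=C^*$, so $C$ is $\EC$-special.

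Part~2 goes through verbatim after interchanging the roles of $\EC$ and $\ET$, so that $A_2^q$ is replaced by $A_2^{q^2}$ and the target point is $g_\ET\cap[T_1]^*=A_1+\eta^{1-q^2}A_2^{q^2}$ (again by Theorem~\ref{gs-gt-gc}); the analogous eigenbasis calculation, applied to a $2$-regulus arising from a \dcsline\ as in Theorem~\ref{psline-special}, now forces $m=-\tau^{q^2}$ and $b=0$. I expect the main obstacle to be the Segre parametrisation step: one has to recognise that the parameter value $m=-\tau^q\in\gfqc\setminus\gfq$ is a legitimate parameter of the extended regulus even though it corresponds to no plane of $[d]$ over $\gfq$, and that this extra parameter is exactly what lets the extended conic meet the transversal $g_\EC$.
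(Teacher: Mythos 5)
Your proposal is correct in substance and reaches the paper's key geometric insight by a different computational route. The paper extends the $2$-regulus $[d]$ to $[d]^*$ by writing down four explicit ruling lines $t_1^*,t_\tau^*,t_{\tau^2}^*,\gs$ and then exhibits, via the special choice of $(l,m,n)$ satisfying $l+m\tau+n\tau^2=0$, a plane $\gamma$ of $[d]^*$ containing a transversal of the cover; your Segre parametrisation $(m,W)\mapsto((\MM_\tau+mI)W,(\MM_{\tau^q}+mI)W,0)$ in the eigenbasis $\{A,A^q,A^{q^2}\}$ finds the same object — your plane with parameter $m=-\tau^q$ is spanned by $A_1$, $A_2^q$ and $((\tau^{q^2}-\tau^q)A^{q^2},(\tau-\tau^q)A^{q^2},0)$, i.e.\ it is exactly the paper's $\gamma$ (up to which conjugate transversal one lands on). Your eigenvalue computations for $\MM_\tau$, $\MM_{\tau^q}$, $\MM_{\tau^{q^2}}$ are right, the target points agree with Theorem~\ref{gs-gt-gc}, and the identification of $[d]^*\cap[C_1]^*$ with the extended conic is asserted at the same level of detail as in the paper, so I do not count that against you. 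What your version buys is a shorter, more transparent calculation; what the paper's version buys is that it runs with a \emph{general} point $P=(a,b,c)$ of $\pi_0$, so no transitivity beyond Theorem~\ref{transsplashes} is needed to cover all sublines of each family.

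That last point is where your proposal has a genuine soft spot, in Part~2. For Part~1 the reduction to the single \psline\ at $(0,0,1)$ matches the paper's own practice (Theorems~\ref{tgtpleqn}--\ref{def:tgt-cover} make the same move via Theorems~\ref{transsplashes} and~\ref{propthm}). But ``verbatim'' for Part~2 means you verify the claim for \emph{one} \dcsline\ $2$-regulus, namely the one parametrised by $((\MM_\tau+mI)W,(\MM_{\tau^{q^2}}+mI)W,0)$, and then need the stabiliser $I=\PGL(3,q^3)_{\Bpi,\li}$ to be transitive on the $q^2+q+1$ $\Bpi$-\dcsline s. That is true ($I$ is a Singer-type group of order $q^2+q+1$ acting regularly on the special dual conics: a nontrivial element fixing one would fix a line of $\Bpi$), but it is not quoted anywhere in this paper and the homography $\Theta$ of Lemma~\ref{theta-action} cannot supply it, since $\Theta^{q^2+q+1}$ acts trivially on each cover plane and so does not move the line $u_1$ inside $[C_1]$ that indexes your chosen $2$-regulus. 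Either prove that transitivity, or do what the paper does and keep $(a,b,c)$ general — your eigenbasis computation goes through unchanged with $\mathscr W$ an arbitrary $2$-dimensional $\gfq$-subspace of $\gfqc$, since the only feature used is the existence of $(l,m,n)\in\mathcal L^*$ with $l+m\tau+n\tau^2=0$.
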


\begin{proof}
By Theorem~\ref{transsplashes}, we can without loss of generality prove this for the exterior \orsp\ $\Bpi$ coordinatised in Section~\ref{mapsection-Bpi2}.
We start with the \orsp\ $\pi_0=\PG(2,q)$ and the  line 
$\ell=[-\tau\tau^q,\tau+\tau^q,-1]$ which is exterior to $\pi_0$. Note that using the notation for $p_0,p_1,p_2$ given in Theorem~\ref{gs-gt-gc}, we have $\ell=[p_0^{q^2},
p_1^{q^2},
p_2^{q^2}]$.
A line of $\pi_0$ has coordinates $[l,m,n]$ for $l,m,n\in\gfq$, and  meets $\ell$ in the point $W'_{l,m,n}=
(-n(\tau+\tau^q)-m,\ l-n\tau\tau^q,\ m\tau\tau^q+l(\tau+\tau^q))$.
We  apply the homography $\sigma$ of Section~\ref{mapsection-Bpi2} with matrix $K$ to map $\pi_0$, $\ell$ to $\Bpi$, $\li$, respectively.
The point $W'_{l,m,n}$ of $\ell$ maps to the point $W_{l,m,n}=KW'_{l,m,n}= (l+m\tau  +n\tau^2,\ l+m\tau^q+ n\tau^{2q},\ 0)$ of $\li$. Writing $\varepsilon=\varepsilon_{l,m,n}=l+m\tau+ n\tau^{2}$, we have $W_\varepsilon=W_{l,m,n}=(\varepsilon,\varepsilon^{q},0)\equiv(\varepsilon^{1-q},1,0)$. Using the notation from Lemma~\ref{coord-covers-new}, this is the point $S_{\varepsilon^{1-q}}\in\li$.
In $\PG(6,q)$, $W_\varepsilon$ corresponds to the spread plane  
$[W_\varepsilon]=[W_{l,m,n}]=\{([\varepsilon x],[\varepsilon^{q} x],0)\equiv([\varepsilon^{1-q} x],[ x],0)\st x\in\gfqcs\}=[S_{\varepsilon^{1-q}}].$

Fix a point $P=(a,b,c)$ of $\pi_0$, so $a,b,c,\in\gfq$, not all zero. Let 
$\mathcal L=\{
(l,m,n)\st l,m,n\in\gfq {\rm \ not \ all\ zero,\ and\ } la+mb+nc=0\}.$
The $q+1$ lines of $\pi_0$ through $P$ have coordinates 
$[l,m,n]\in\mathcal L$.
These $q+1$ lines meet the exterior line $\ell$ of $\pi_0$ in a $\pi_0$-\psline\, which, under the collineation $\sigma$, maps to a $\Bpi$-\psline\  $d$ of $\li$. By Theorem~\ref{sublinesinBB}, in $\PG(6,q)$, $d$ corresponds to the $2$-regulus $[d]$ which we denote by $\RR$, so 
$\RR=[d]=\{[W_\varepsilon]=[S_{\varepsilon^{1-q}}] \st \varepsilon\in\mathscr{W}\}$,
where ${\mathscr W}=\{\varepsilon=\varepsilon_{l,m,n}=l+m\tau+n\tau^2\st  
(l,m,n)\in\mathcal L\}$.
For each $\alpha\in\gfqcs$, consider the set of points $t_\alpha=\{ ([\varepsilon\alpha],[\varepsilon^q\alpha],0)\st\varepsilon\in\mathscr{W}\}$. As $\mathscr W$ is closed under addition, $t_\alpha$  is a line of $\si\cong\PG(5,q)$, further $t_\alpha$  meets every plane in $\RR$. Hence $t_\alpha$  is a ruling line of the $2$-regulus $\RR$. 

%

By Theorem~\ref{psline-converse}(2), the $2$-regulus $\RR$ meets a cover plane of the conic cover $\EC$ in a conic $\C_k=[C_k]\cap\RR$ for $k\in\KK$. To show that the conic $\C_k$ is $\EC$-special, we need to extend it to $\PG(5,q^3)$, and show that it meets the three transversal lines of $\EC$. To do this, we extend the $2$-regulus $\RR$ of $\si\cong\PG(5,q)$ to a $2$-regulus $\RR^*$ of $\PG(5,q^3)$, 
so $\C_k^*=[C_k]^*\cap\RR^*$. 
We then use coordinates to show that one of the planes of $\RR^*$ contains the transversal line $\gc^{q^2}$ of $\EC$, and then deduce that $\C_k^*$ meets   $\gc^{q^2}$.

To extend $\RR$ to a $2$-regulus $\RR^*$ of $\PG(5,q^3)$, we  find four lines in $\PG(5,q^3)$ that meet each extended plane of $\RR$. As 
 a $2$-regulus is uniquely determined by four ruling lines in general position, we can use these four lines to define the $2$-regulus $\RR^*$. The transversal line $\gs$ of the regular $2$-spread $\S$ can be used as one of our ruling lines, for the other three ruling lines, we use the extended lines
 $t_1^*$, $t_\tau^*,t_{\tau^2}^*$, which each meet every plane of $\RR$. 
 So $\RR^*$
 is the $2$-regulus of $\PG(5,q^3)$ determined by the four ruling lines $t_1^*$, $t_\tau^*,t_{\tau^2}^*,\gs$ (which are in general position), and further  
$\RR^* \cap \si=\RR$.

We now exhibit a plane $\gamma$ of $\RR^*$ that contains the transversal line $\gc^{q^2}$ of the conic cover $\EC$.
Extend the set $\mathcal L$ to ${\mathcal L}^*=\{(l,m,n)\st l,m,n\in\gfqc {\rm \ not\ all\ zero,\  and\ }  la+mb+nc=0\}$. 
We use the matrix $M_\tau$ defined in Section~\ref{sec:eg:Mk}, and write $M=M_\tau$. 
The ruling line $t^*_{\tau^i}$, $i=0,1,2$, has points $P_{\tau^i,l,m,n}$ with $(l,m,n)\in\mathcal L^*$, where
$P_{\tau^i,l,m,n}=l(\MMtau^i[1],\MMtau^i[1],0)+m(\MMtau^i[\tau],\MMtau^i[\tau^q],0)+n(\MMtau^i[\tau^2],\MMtau^i[\tau^{2q}],0)$.
Recall that the \orsl\ $d$ corresponds to the fixed point $P=(a,b,c)\in\pi_0$. 
Consider the following $(l,m,n)\in\L^*$,
\begin{eqnarray}\label{lmnabc}
l=c\tau-b\tau^2,\ \ \ m=a\tau^2-c,\ \ \ n=b-a\tau.
\end{eqnarray}
Note that for these $l,m,n$ we have
\begin{eqnarray}\label{lmn-tau-eqn}
l+m\tau+n\tau^2=0.
\end{eqnarray}
For  $l,m,n$ as in (\ref{lmnabc}),
consider the plane $\gamma$ spanned by the three points $P_{1,l,m,n}\in t_1^*, P_{\tau,l,m,n}\in t_\tau^*,P_{\tau^2,l,m,n}\in t_{\tau^2}^*$. 
We first show that $\gamma$ is a plane of the $2$-regulus $\RR^*$ by showing that the fourth ruling line  $g_\ES$ of $\RR^*$ also meets $\gamma$. By Theorem~\ref{gs-gt-gc}, $\gs=\langle A_1,A_2\rangle$, we show that  $\gs$ meets $\gamma$ by showing that  the point $A_2$  lies in 
$\gamma$. 
With $l,m,n$ given by (\ref{lmnabc}), consider the following point $F=p_0P_{1,l,m,n}+ p_1P_{\tau,l,m,n}+p_2P_{\tau^2,l,m,n}$ of $\gamma$. To simplify the notation, we use the  the point $A=(p_0,p_1,p_2)^t$, and matrix $U_0=p_0I+p_1\MMtau+p_1\MMtau^2$  defined in Section~\ref{sec:eg:Mk}, and note that $U_0[\alpha]=\alpha A$. We have
\begin{eqnarray*}
F
&=&( lU_0[1]+m U_0[\tau]+nU_0[\tau^2],\ lU_0[1]+mU_0[\tau^q]+nU_0[\tau^{2q}],\ 0)\\
&=&(lA+m\tau A+n\tau^2 A,\ lA+m\tau^q A+n\tau^{2q} A ,\ 0).
\end{eqnarray*}
By (\ref{lmn-tau-eqn}),  $F\equiv([0],A,0)=A_2$, and by Lemma~\ref{S-coord-g}, $\gs=\langle A_1,A_2\rangle$, so $F\in\gs\cap\gamma$. That is, the four ruling lines $t_1^*,t_\tau^*,t_{\tau^2}^*,\gs$ of the $2$-regulus $\RR^*$ all meet the plane $\gamma$,  and so $\gamma$ is a plane of $\RR^*$.

We now show that the transversal line 
 $g_\EC^{q^2}$ of $\EC$ lies in the plane 
 $\gamma$ of $\RR^*$.
 Let $G=p_0^{q^2}P_{1,l,m,n}+
p_1^{q^2}P_{\tau,l,m,n}+
p_2^{q^2}P_{\tau^2,l,m,n}$, and note that $G\in\gamma$. We use the matrix $U_2=p_0^{q^2}I+p_1^{q^2}\MMtau+p_1^{q^2}\MMtau^2$  defined in Section~\ref{sec:eg:Mk}, and note that $U_2[\alpha]=\alpha^{q^2} A^{q^2}$, so we 
have
\begin{eqnarray*}
G
&=&(lU_2[1]+m U_2[\tau]+n^2U_2[\tau^2],\ lU_2[1]+mU_2[\tau^q]+nU_2[\tau^{2q}],\ 0)\\
&=&(lA^{q^2}+m\tau^{q^2} A^{q^2}+n\tau^{2q^2} A^{q^2},\ lA^{q^2}+m\tau A^{q^2}+n\tau^{2} A^{q^2} ,\ 0).
\end{eqnarray*}
By (\ref{lmn-tau-eqn}), $G\equiv(A^{q^2},[0],0)=A_1^{q^2}$, so
 $\gamma$ contains the points $G=A_1^{q^2}$ and $F=A_2$. Hence by Theorem~\ref{gs-gt-gc}, $\gamma$ contains the transversal line $g_\EC^{q^2}=\langle  A_1^{q^2},A_2\rangle$ of $\EC$.

We showed above that the $2$-regulus $[d]=\RR$ meets a cover plane $[C_i]$ of $\EC$ in a conic $\C_i$. We want to show that $\C_i$ is a $\EC$-special conic, that is, we want to show that in $\PG(6,q^3)$, the extended conic $\C_i^*=[C_i]^*\cap\RR^*$ contains the three points $\gc\cap[C_i]^*,
\gc^q\cap[C_i]^*,
\gc^{q^2}\cap[C_i]^*$. We have shown that the transversal line $\gc^{q^2}$ of $\EC$ lies in a plane $\gamma$ of $\RR^*$. As the extended cover plane $[C_i]^*$ meets the transversal line $\gc^{q^2}$ in a unique point denoted $P_i$,  we have $P_i=[C_i]^*\cap\gc^{q^2}=[C_i]^*\cap\gamma\in[C_i]^*\cap\RR^*=\C_i^*$. Hence $\C_i^*$ contains the point $\gc^{q^2}\cap[C_i]^*$,
and hence it also contains the conjugate points $\gc^q\cap[C_i]^*,
\gc\cap[C_i]^*$. That is, the conic $\C_i=[C_i]\cap\RR$ is a $\EC$-special conic, completing the proof of part 1.

%

We now tackle part 2.  Let $\beta$ be the collineation defined in (\ref{def:beta}) acting on $\si^*\cong\PG(5,q^3)$.
Consider the line  $t_1$ defined above which lies in the tangent cover plane $[T_1]$. By the proof of Lemma~\ref{coord-covers-new}, $\beta$ maps the cover plane $[T_1]\in\ET$ to the cover plane $[C_1]\in\EC$. Hence the line
 $u_1=\beta(t_1)$ lies in the cover plane $[C_1]$ of the conic cover $\EC$. As $u_1$ meets $q+1$ planes of $\ES$, these planes form a $2$-regulus of $\ES$ which we denote $\R_1$.
Further, by Lemma~\ref{theta-action}, all the conic cover planes meet $\R_1$ in ruling lines of $\R_1$. 
By Theorem~\ref{sublinesinBB}, the $2$-regulus $\R_1$ corresponds to an \orsl\ $d'$ of $\li$ in $\PG(2,q^3)$. If $d'$ were a $\Bpi$-\psline\ of $\ES$, then by Theorem~\ref{psline-special}(1),  the tangent cover planes would meet the $2$-regulus $[d']=\R_1$ in ruling lines of $\R_1$.  However   we have just shown that the conic cover planes meet $[d']=\R_1$ in ruling lines of $\R_1$. Hence $d'$ is a $\Bpi$-\dcsline\ of $\ES$ in $\PG(2,q^3)$, and so we are in case 2.

As before, we want to extend the $2$-regulus $\R_1$ of $\si$ to a $2$-regulus $\R_1^*$ of $\si^*\cong\PG(5,q^3)$. 
We need to  determine coordinates of four ruling lines of the $2$-regulus $\R_1$. Let $\varepsilon\in\mathscr W$, then the point $([\varepsilon],[\varepsilon^{q^2}],0)$ lies in $u_1$. Further, it lies in the spread element $[S_{\varepsilon^{1-q^2}}]$. 
 Hence the $2$-regulus $\R_1$ consists of the planes $\{ [S_{\varepsilon^{1-q^2}}] \st \varepsilon \in {\mathscr W}\}$. As $[S_{\varepsilon^{1-q^2}}]$ has elements $([\varepsilon^{1-q^2}x],[x],0)\equiv([\varepsilon x],[\varepsilon^{q^2}x],0)$, the ruling lines of the $2$-regulus $\R_1$ are given by
$
v_\alpha=\{([\alpha \varepsilon],[\alpha \varepsilon^{q^2}],0)\st \varepsilon \in {\mathscr W}\},
$
for $\alpha\in\gfqcs$ (note that $u_1=v_1$). 
We extend the $2$-regulus $\R_1$ to the $2$-regulus $\R_1^*$
 uniquely determined by the four ruling lines 
$v_1^*=u_1^*,v_\tau^*,v_{\tau^2}^*,g_\ES$, where
$v_{\tau^i}^*=\{Q_{\tau^i,l,m,n}\st (l,m,n)\in \A^*\}$, and 
$Q_{\tau^i,l,m,n}=l(\MMtau^i[1],\MMtau^i[1],0)+m(\MMtau^i[\tau],\MMtau^i[\tau^{q^2}],0)+n(\MMtau^i[\tau^{2}],\MMtau^i[\tau^{2q^2}], 0)$, $i=0,1,2$.
 For $l,m,n$ as in (\ref{lmnabc}), consider the plane $\delta=\langle
Q_{1,l,m,n},
Q_{\tau,l,m,n},
Q_{\tau^2,l,m,n}
\rangle$.
Clearly $\delta$ meets the ruling lines $v_1^*,v_\tau^*,v_{\tau^2}^*$ of $\R_1^*$. We show that $\delta$ lies in the $2$-regulus $\R_1^*$ by showing that the fourth ruling line  $g_\ES$ of $\R_1^*$ also meets $\delta$. 
Consider the  point $H=p_0Q_{1,l,m,n}+p_1Q_{\tau,l,m,n}+p_2Q_{\tau^2,l,m,n}$ of $\delta$, then
\begin{eqnarray*}
H
&=&(lU_0[1]+mU_0[\tau]+nU_0[\tau^{2}],\ lU_0[1]+mU_0[\tau^{q^2}]+nU_0[\tau^{2q^2}],\ 0)\\
&=&((l+m\tau+n\tau^{2})A,\ (l+m\tau^{q^2}+n\tau^{2q^2})A,\ 0).
\end{eqnarray*}
By (\ref{lmn-tau-eqn}),  $H\equiv([0],A,0)=A_2$, hence $\gs=\langle A_1,A_2\rangle$ meets $\delta$ in the point $H=A_2$, and so the plane
 $\delta$ lies in the $2$-regulus $\R_1^*$.

We now show that the transversal line $g_\ET^q$ of the tangent cover $\ET$ lies in this plane $\delta$. Let $J=p_0^{q}Q_{1,l,m,n}+p_1^{q}Q_{\tau,l,m,n}+p_2^{q}Q_{\tau^2,l,m,n}$, and note that $J\in\delta$. Now 
\begin{eqnarray*}
J&=&(lU_1[1]+mU_1[\tau]+nU_1[\tau^{2}],\ lU_1[1]+mU_1[\tau^{q^2}]+nU_1[\tau^{2q^2}],\ 0)\\
&=&((l+m\tau^q+n\tau^{2q})A^{q},\ (l+m\tau+n\tau^{2})A^{q},\ 0).
\end{eqnarray*}
By (\ref{lmn-tau-eqn}),  $J\equiv(A^q,[0],0)=A_1^q$, thus 
$\delta$ contains the points $H=A_2$, $J=A_1^q$, and so by Theorem~\ref{gs-gt-gc}, $\delta$ contains the transversal line $g_\ET^q=
\langle A_2,A_1^q\rangle$ of $\ET$. 
Similar to the argument for part 1 above, a plane of $\ET$ meets the $2$-regulus $[d']=\R_1$ in a conic, which is $\ET$-special conic as $g_\ET^q\subset\delta\subset\R_1^*$. 
That is, a $\Bpi$-\dcsline\ of $\li$ corresponds to a $2$-regulus of $\si$ that meets tangent cover planes in  $\ET$-special conics, so part 2 is proved.
\end{proof}


\section{Conclusion}\Label{sec:concl}

An investigation into the interaction between an exterior \orsp\ $\pi$ of $\PG(2,q^3)$, and its exterior splash on $\li$ began in \cite{BJ-ext1}.  The main focus of \cite{BJ-ext1} was to show that exterior splashes are projectively equivalent to scattered linear sets of rank 3, covers of circle geometries, Sherk sets of size $q^2+q+1$. Further, we investigated the geometric relationship between the \orsls\ of $\ES$ and the points of $\pi$. 
The current article focusses on using the Bruck-Bose representation in $\PG(6,q)$ to continue the study of exterior splashes, in particular their interplay with \orsps. The notion of special conics and special twisted cubics is closely tied with this interplay. This theme is continued in \cite{BJ-ext3} where the authors show that $(\pi,\li)$-special conics of $\pi$ in $\PG(2,q^3)$ correspond to $\EC$-special conics in $\PG(6,q)$.


\begin{thebibliography}{999}

\bibitem{andr54} J.\ Andr\'e. \"Uber nicht-Desarguessche Ebenen mit
 transitiver Translationgruppe. {\em Math.\ Z.}, {\bf 60} (1954)
 156--186.

%

\bibitem{barw12} S.G. Barwick and W.-A. Jackson. Sublines and subplanes of PG$(2,q^3)$ in the Bruck-Bose representation in PG$(6,q)$.
{\em Finite Fields Appl.,} {\bf 18} (2012) 93--107.


\bibitem{BJ-iff} S.G. Barwick and W.-A. Jackson. A characterisation of tangent
  subplanes of PG$(2,q^3)$.
 {\em Designs Codes, Cryptogr.} {\bf 71} (2014) 541--545.
 
\bibitem{BJ-tgt1} S.G. Barwick and W.-A. Jackson. An investigation of the tangent splash of a subplane of PG$(2,q^3)$.
{\em Designs Codes, Cryptogr.}
2014, (DOI) 10.1007/s10623-014-9971-3. 

\bibitem{BJ-tgt2} S.G. Barwick and W.-A.~Jackson. The tangent splash in PG$(6,q)$. http://arxiv.org/abs/1305.6674. 



\bibitem{BJ-ext1} S.G. Barwick and W.-A.~Jackson. Exterior splashes and linear sets of rank 3.
http://arxiv.org/abs/1404.1641.

\bibitem{BJ-ext3} S.G. Barwick and W.-A.~Jackson. The exterior splash in $\PG(6,q)$: Special conics. http://arxiv.org/abs/

\bibitem{bruc69} R.H.\ Bruck.  Construction problems of finite
projective planes.  {\em Conference on Combinatorial Mathematics
and its Applications},  University of North Carolina Press, (1969)
426--514.

\bibitem{bruc73a} R.H. Bruck. Circle geometry in higher dimensions. {\em A Survey of Combinatorial Theory}, eds. J. N. Srivastava, et al,  Amsterdam (1973) 69--77. 

\bibitem{bruc73b} R.H. Bruck. Circle geometry in higher dimensions II. {\em Geom. Dedicata} {\bf 2} (1973) 133--188.


\bibitem{bruc64} R.H.\ Bruck and R.C.\ Bose. The construction of
 translation planes from projective spaces. {\em J.\ Algebra}, {\bf
 1} (1964) 85--102.

\bibitem{bruc66} R.H.\ Bruck and R.C.\ Bose. Linear
representations of projective planes in projective spaces. {\em
J.\ Algebra}, {\bf 4} (1966) 117-172.

\bibitem{culbert} C. Culbert and G.L. Ebert. Circle Geometry and
  three-dimensional subregular translation planes.
{\em Innov. Incidence Geom.}, {\bf 1} (2005) 3--18. 

\bibitem{ferr03} S. Ferret and L. Storme. Results on maximal partial spreads in $\PG(3,p^3)$ and on related minihypers. {\em Des. Codes Cryptogr.}, 
{\bf 29} (2003) 105--122.

%


\bibitem{hirs85} J.W.P.\ Hirschfeld. {\em Finite Projective Spaces of
 Three Dimensions.} Oxford University Press, 1985.

\bibitem{hirs91} J.W.P.\ Hirschfeld and J.A.\ Thas. {\em General
  Galois Geometries.} Oxford University
  Press, 1991.

\bibitem{lavr10} M. Lavrauw and G. Van de Voorde. On linear sets on a projective line. {\em Des. Codes Cryptogr.}, {\bf 56} (2010) 89--104.


\bibitem{lavr14} M. Lavrauw
and C. Zanella. Subgeometries and linear sets on a projective line
 	http://arxiv.org/abs/1403.5754.
	
%

\bibitem{ostrom} T.G. Ostrom. Hyper-reguli. {\em J. Geom.}, {\bf 48} (1993) 157--166.


\bibitem{poma97} R. Pomareda. Hyper-reguli in projective space of dimension 5, Mostly Finite
Geometries (1996), {\em Lecture Notes in Pure and Appl. Math.},
{\bf 190} (1997) 379--381.

%

\end{thebibliography}
\end{document}